\newtheorem{preproof}{{\bf \indent Proof.}}
\newenvironment{proof}[1]{\begin{preproof}{\rm
               #1}\hfill{$\Box$}}{\end{preproof}}
\newtheorem{example}{\bf\indent Example}[section]
\newtheorem{thm}{{\bf\indent Theorem}}[section]
\newtheorem{prop}{\bf\indent Proposition}[section]
\newtheorem{remark}{\bf\indent Remark}[section]
\newtheorem{lem}{\bf\indent Lemma}[section]
\title{\bf \large Exploring Metric and Strong Metric Dimensions in\\ Inclusion Ideal Graphs of Commutative Rings\thanks
{{\it Key Words}: Metric dimension, Strong metric dimension, Inclusion ideal graph, Commutative ring.\newline
{\indent{~~2020 {\it Mathematics Subject Classification}: 13A99; 05C78; 05C12.}}}}
\author{{\normalsize  {\sc E. Dodongeh${}^{\mathsf{a}}$,  A. Moussavi${}^{\mathsf{a}}$,    R.  Nikandish${}^{\mathsf{b}}$}
}\vspace{3mm}\\
{\footnotesize{${}^{\mathsf{a}}$\it Department of Mathematics, University of Tarbiat Modarres,
Tehran, Iran}}\\
%{\footnotesize{${}^{\mathsf{}}$\it Islamic Azad University (IAU), Tehran, Iran}}\\
{\footnotesize{${}^{\mathsf{b}}$\it  Department of Mathematics,
Jundi-Shapur University of Technology,  Dezful,
Iran}}\\
{\footnotesize{${}^{\mathsf{}}$\it}}\\
{\footnotesize{$\mathsf{e.dodongeh@modares.ac.ir}$\quad\quad $\mathsf{moussavi.a@modares.ac.ir}$ \quad\quad
$\mathsf{r.nikandish@ipm.ir}$\quad\quad}}}
\begin{document}

\maketitle

%%%%%%%%%%%%%%%%%%%%%%%%%%%%%%%%%%%%%%%%%%%%%%%%%%%%%%%%%%%%%%%%%%%%%%%%%%%%%%%%%%%%%%%%%%%%%%%%%%%%%%%%%%%%%%%%%%%%%%%%%%%%%%%%%%%%%%%%%%%%%%%%%%%%%%%
\begin{abstract}
{\small This paper investigates the inclusion ideal graph of a commutative unitary ring $R$, denoted as $In(R)$, which is defined by its vertices representing all non-trivial ideals of $R$. The edges of the graph connect two distinct vertices if one ideal is a proper subset of the other. We explore the metric dimension of $In(R)$, providing a comprehensive characterization of its resolving graph. Additionally, we compute the strong metric dimension of $In(R)$, illustrating the implications of our findings on the structure of the graph and its applications in the study of ideal theory within commutative rings.}
\end{abstract}
%%%%%%%%%%%%%%%%%%%%%%%%%%%%%%%%%%%%%%%%%%%%%%%%%%%%%%%%%%%%%%%%%%%%%%%%%%%%%%%%%%%%%%%%%%%%%%%%%%%%%%%%%%%%%%%%%%%%%%%%%%%%%%%%%%%%%%%%%%%%%%%%%%%%%%%
\begin{center}\section{Introduction}\end{center}
Metric and strong metric dimension in a graph are examples of NP-hard problems in discrete structures that have found several applications in computer science, mechanical engineering, optimization, chemistry etc. Although many important works have been done by graph theorists in computing metric and strong metric dimension, they are still two of the most active research areas in graph theory. For the most recent study in this field see \cite{abr, bai, ghala, Ji, Ve}. In addition to a wide range of applications, the complexity of computations  has sparked considerable interest in characterizing these invariants for graphs  associated with algebraic structures. Some
examples in this direction may be found in \cite{ali, dol, dol2,  dol3, ma, nili, nili2, Pirzada1, Pirzada2, zai}.  This paper has such a theme and aims to compute the metric and strong metric dimension in inclusion ideal graphs of commutative rings.
\par
 For graph theory terminology, we follow \cite{west}. Let $G=(V,E)$ be a graph with $V=V(G)$ as the vertex set and $E=E(G)$ as the edge set. A complete graph of order $n$  is denoted by $K_n$. Also, distance between two distinct vertices $x$ and $y$ is denoted by $d(x,y)$. By diam$(G)$, we mean the diameter of $G$. If a graph $H$ is a subgraph of $G$, then we write $H \subseteq G$. Moreover, the induced subgraph by $V_0\subseteq V$ is  denoted by $G[V_0]$. The open and closed neighborhood of the vertex $x$ are denoted by  $N(x)$ and $N[x]$, respectively. The independence number and vertex cover number of the graph $G$ are denoted by $\beta(G)$ and  $\alpha(G)$, respectively.
 Let $S=\{v_1,v_2,\dots,v_k\}$ be an ordered subset of $V$ and $v\in V\setminus S$. Then the representation vector of  $v$ with respect to $S$ is denoted by $D(v|S)$ which is defined as follows: $D(v|S)=(d(v,v_1),d(v,v_2),\dots, d(v,v_k))$.  An ordered subset $S\subseteq V(G)$ is called \textit{resolving} provided that distinct vertices out of $S$ have different representation vectors with respect to $S$.  Any
resolving set of minimum cardinality is called \textit{metric basis for} $G$, and its cardinal number is called the \textit{metric dimension of} $G$. We denote the metric dimension of $G$  by $dim_M(G)$.
 Two different vertices $u,v$ \textit{are mutually maximally distant} if $d(v, w) \leq d(u, v)$, for every $w \in N(u)$ and $ d(u, w) \leq d(u, v)$, for every $w \in N(v)$. For a graph $G$, \textit{the strong resolving graph of} $G$, is denoted by $G_{SR}$ and its vertex and edge set are defined as follow:  
$V(G_{SR})= \lbrace u \in V (G)|\,there~exists ~v \in V (G)  ~such~that ~u,  v ~are ~mutually ~maximally ~distant \rbrace$ and  $uv \in E(G_{SR})$  if  and  only  if $u$  and $v$ are  mutually maximally distant. Two vertices $u$ and $v$ are \textit{strongly resolved} by some vertex $w$ if either 
$d(w, u)$ is  equal to $d(w, v) + d(v, u)$  or $d(w, v)$ is equal to $d(w, u) + d(v, u)$. A set $W$ of vertices
is a \textit{strong resolving set of} $G$ if every two distinct vertices of $G$ are strongly
resolved by some vertex of $W$ and a minimum strong resolving set is called
 \textit{strong metric basis} and its cardinality is \textit{the strong metric dimension of} 
$G$. We denoted the strong metric dimension of $G$, by $sdim(G)$.

\par
Throughout this paper, all rings are assumed to be commutative with identity. The set of all non-trivial ideals of $R$ is denoted by  $I(R)$. The ring $R$ is called  \textit{reduced} if it has no nilpotent elements other than $0_R$. For undefined notions in ring theory, we refer the reader to \cite{ati}.
\par
\textit{The inclusion ideal graph of a  ring} $R$, denoted by $In(R)$, is a graph whose vertex set is $I(R)$ and two distinct vertices are adjacent if and only if one of them is properly contained in the other. This graph was first introduced and studied by Akbari et.al in  \cite{Akbari} and several interesting properties were obtained. Subsequently, this concept has been the subject of many researches; see for instance \cite{das, ou, wang}.
In this paper, we characterize the metric dimension of  $In(R)$. Furthermore, we investigate the structure of the strong resolving graph of $In(R)$ and compute  $sdim(In(R))$ as an application.
%%%%%%%%%%%%%%%%%%%%%%%%%%%%%%%%%%%%%%%%%%%%%%%%%%%%%%%%%%%%%%%%%%%%%%%%%%%%%%%%%%%%%%%%%%%%%%%%%%%%%%%%%%%%%%%%%%%%%%%%%%%%%%%%%%%%%%%%%%%%%%%%%%%%%%%
%%%%%%%%%%%%%%%%%%%%%%%%%%%%%%%%%%%%%%%%%%%%%%%%%%%%%%%%%%%%%%%%%%%%%%%%%%%%%%%%%%%%%%%%%%%%%%%%%%%%%%%%%%%%%%%%%%%%%%%%%%%%%%%%%%%%%%%%%%%%%%%%%%%%%%%%
\section{ $dim_{M}(In(R))$ and $sdim(In(R))$, when $R$ is reduced}
 \noindent
 In this section, we first show that $dim_M(In(R))$ is finite if and only if $|I(R)|< \infty$.  Then we provide some metric and strong metric dimension formulas for $dim_M(In(R))$ and $sdim(In(R))$, when $R$ is a reduced ring.
%%%%%%%%%%%%%%%%%%%%%%%%%%%%%%%%%%%%%%%%%%%%%%%%%%%%%%%%%%%%%%%%%%%%%%%%%%%%%%%%%%%%%%%%%%%%%%%%%%%%%%%%%%%%%%%%%%%%%%%%%%%%%%%%%%%%%%%%%%%%%%%%%%%%%%%
%%%%%%%%%%%%%%%%%%%%%%%%%%%%%%%%%%%%%%%%%%%%%%%%%%%%%%%%%%%%%%%%%%%%%%%%%%%%%%%%%%%%%%%%%%%%%%%%%%%%%%%%%%%%%%%%%%%%%%%%%%%%%%%%%%%%%%%%%%%%%%%%%%%%%%%%

We fix the following notations.
\begin{remark}
\label{dimfin}
{\rm Let $R \cong \prod_{i=1}^{n}R_{i}$, where $R_{i}$ is a ring for every $1\leq i \leq n$, and $I=I_{1}\times \cdots \times I_{n} \in V(In(R))$. We adopt the following notations: \\
$1)$ By $I^{c}=I_{1}^{c}\times \cdots \times I_{n}^{c}$, we mean a vertex of $In(R)$ such that $I_{i}^{c}=R_{i}$ if and only if $I_{i}=0$ for every $1\leq i \leq n$.\\
$2)$ If $R_i$ is a field, then $X_{i}=0\times \cdots \times 0\times R_{i} \times 0 \times\cdots \times 0$, where the field $R_{i}$ is in the $i$-th position (We call $X_i$ the $i$-th minimal ideal, if every $R_i$ is a field).\\
%$3)$ $W_{j}=0\times \cdots \times 0\times R_{j} \times 0 \times\cdots \times 0$, where PIR $R_{j}$ is the $j$-th component.\\
$3)$ By $M$, we mean the following subset of $V(In(R))$: $$M=\lbrace I=I_{1}\times \cdots \times I_{m}|\, I_{i}=0 ~or~ I_{i}=R_{i}~ for~ every~ 1\leq i \leq n\rbrace.$$}
\end{remark}

\begin{prop}\label{dimfinite}
Let $R$ be a ring that is not a field. Then $dim_{M}(In(R))< \infty$ if and only if $|I(R)|< \infty$.
\end{prop}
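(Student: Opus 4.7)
The plan is to prove the two directions separately. The easy direction ($\Leftarrow$) is immediate: if $|I(R)| < \infty$, then the vertex set of $In(R)$ is finite, and the metric dimension of any graph on $n$ vertices is at most $n-1$, giving $dim_M(In(R)) < \infty$.

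For the converse, I would argue the contrapositive: if $|I(R)| = \infty$, then no finite subset of $V(In(R))$ is a resolving set. The strategy is a pigeonhole argument, which first requires a uniform bound on distances in $In(R)$. I will establish a constant $c$ (one may take $c = 4$, following the diameter analysis in \cite{Akbari}) such that every connected component $C$ of $In(R)$ satisfies $\mathrm{diam}(C) \leq c$. The key observation is that for any two incomparable nontrivial ideals $I$ and $J$, at least one of $I \cap J$ or $I + J$ is typically a nontrivial ideal, providing a length-$2$ path from $I$ to $J$. The degenerate situation $I \cap J = 0$ and $I + J = R$ (so that $R \cong R/I \times R/J$) is the delicate case: here, I would exploit $|I(R)| = \infty$ to produce a third nontrivial ideal $K$ that meets one of $I, J$ nontrivially and sits in a short path between them.

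Given this diameter bound, take an arbitrary finite $S = \{v_1, \ldots, v_k\} \subseteq V(In(R))$. For each $v \in V(In(R)) \setminus S$, every entry of $D(v \mid S)$ lies in $\{1, 2, \ldots, c\} \cup \{\infty\}$, with the value $\infty$ covering the case where $v$ and $v_i$ lie in different connected components of $In(R)$. Hence there are at most $(c+1)^k$ possible representation vectors. Since $V(In(R)) \setminus S$ is infinite, the pigeonhole principle yields distinct vertices $u, w \notin S$ with $D(u \mid S) = D(w \mid S)$, contradicting the assumption that $S$ resolves $In(R)$. As $S$ was arbitrary, $dim_M(In(R)) = \infty$.

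The main obstacle is the uniform diameter bound. The generic arguments via $I \cap J$ and $I + J$ cover most cases with distance $2$, but the complementary-ideals configuration requires a careful case analysis; this is precisely where the hypothesis $|I(R)| = \infty$ becomes indispensable, since it guarantees the auxiliary ideals needed to keep distances bounded. Once this combinatorial fact is in hand, the pigeonhole counting is routine and completes the proof.
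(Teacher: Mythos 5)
Your proposal is correct and follows essentially the same route as the paper: the forward direction is the identical pigeonhole count of possible representation vectors over a graph of uniformly bounded diameter. The only difference is that the paper simply invokes \cite[Theorem 2.1]{Akbari} to get $diam(In(R))\leq 3$ (so at most $3^n$ vectors and $|V(In(R))|\leq 3^n+n$), whereas you re-derive a diameter bound from scratch via $I\cap J$ and $I+J$; that derivation is sound but unnecessary given the cited result.
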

\begin{proof}
{First assume that $dim_M(In(R))$ is finite and $W=\{W_1,\ldots,W_n\}$ is a metric basis for $In(R)$, where $n$ is a non-negative integer. By \cite[Theorem\,  2.1]{Akbari}, there are only $3^n$ possibilities for $D(X|W)$, for every $X \in V(In(R))\setminus W$. Thus $|V(In(R)| \leq 3^{n}+n$ and hence $R$ has finitely many ideals. The converse implication is clear.}
\end{proof}

By Proposition \ref{dimfinite}, to express metric and strong metric dimensions in some explicit formulas, it is enough to consider rings with finitely many ideals. Therefore, from now on, we suppose that all rings $R$ have finitely many ideals. It is well-known that reduced Artinian rings are the direct product of finitely many fields.

Let $n\geq 3$ be a positive integer. In the next theorem, $dim_M(In(\prod_{i=1}^{n}\mathbb{F}_{i}))$ is determined. For this aim, the following lemma is needed. 

\begin{lem}\label{lemma2d}
Let  $n\geq 3$ be a positive integer and $R\cong \prod_{i=1}^{n}\mathbb{F}_{i}$, where $\mathbb{F}_{i}$ is a field for every $1\leq i \leq n$. Then $diam(In(R))=3$.
\end{lem}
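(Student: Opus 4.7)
The plan is to prove both inequalities $\mathrm{diam}(In(R)) \leq 3$ and $\mathrm{diam}(In(R)) \geq 3$. Since $R$ is a product of fields, every ideal has the form $I = I_1 \times \cdots \times I_n$ with $I_i \in \{0,\mathbb{F}_i\}$, so I would parametrize each non-trivial ideal by its support $S(I) = \{i : I_i \neq 0\}$, a non-empty proper subset of $\{1,\ldots,n\}$. Under this identification, adjacency in $In(R)$ corresponds to proper inclusion of supports, and $I \cap J$, $I+J$, $X_i$ all translate to the obvious lattice operations.

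For the upper bound, I would perform a case analysis on two distinct non-trivial ideals $I,J$. If they are comparable, the distance is $1$. If they are incomparable but $I \cap J \neq 0$, then $I \cap J$ is a vertex strictly contained in each of $I$ and $J$, giving the $2$-path $I - I\cap J - J$. Symmetrically, if $I + J \neq R$, then $I+J$ is a vertex strictly containing each, giving another $2$-path. The only remaining case is when $I \cap J = 0$ and $I + J = R$, i.e., $S(J) = S(I)^{c}$. Here, since $n \geq 3$, at least one of $|S(I)|,|S(J)|$ is $\geq 2$; say $|S(I)| \geq 2$ and pick $i_0 \in S(I)$. Then $X_{i_0} \subsetneq I$, and the ideal $L = X_{i_0} + J$ has support $\{i_0\} \cup S(J)$, which strictly contains $S(J)$ and is a proper subset of $\{1,\ldots,n\}$ (the set $S(I)\setminus\{i_0\}$ is nonempty). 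Thus $J \subsetneq L$ and $X_{i_0} \subsetneq L$, producing the path $I - X_{i_0} - L - J$ of length~$3$.

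For the lower bound, I would exhibit one explicit pair at distance exactly $3$. Choose $I = X_1$ and $J = X_2 + \cdots + X_n$, so $S(I) = \{1\}$ and $S(J) = \{2,\ldots,n\}$; both are non-trivial proper ideals since $n \geq 3$, they are incomparable, and they satisfy $I \cap J = 0$, $I + J = R$. To rule out any $2$-path, I would suppose $K$ is a vertex adjacent to both $I$ and $J$ and run through the four sign patterns: the combination $I\subsetneq K, J\subsetneq K$ forces $K \supseteq I+J = R$; the combination $K\subsetneq I, K\subsetneq J$ forces $K \subseteq I\cap J = 0$; and the two mixed cases ($I \subsetneq K \subsetneq J$ or $J \subsetneq K \subsetneq I$) contradict the incomparability of $I$ and $J$. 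Combined with the upper bound, this yields $d(I,J)=3$, hence $\mathrm{diam}(In(R)) = 3$.

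The main obstacle is the complementary-support case $S(J)=S(I)^c$: here no intermediate vertex can lie between $I$ and $J$, so one really needs three edges, and the argument depends essentially on the hypothesis $n \geq 3$ to produce a minimal ideal $X_{i_0}$ sitting strictly below one side while leaving room to enlarge the other side into a proper ideal. The same configuration furnishes the extremal pair for the lower bound, so pinpointing it is the crux of both halves of the proof.
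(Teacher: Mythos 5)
Your proof is correct, and its second half is essentially the paper's argument: the paper also takes the extremal pair $X_1$ and $X_1^{c}$ and rules out a common neighbour $T$ by noting that the only viable sign pattern forces $T\supseteq X_1+X_1^{c}=R$. Where you genuinely diverge is the upper bound. The paper does not prove $\mathrm{diam}(In(R))\leq 3$ at all; it imports it from \cite{Akbari} and only exhibits the explicit path $X_1\sim \mathbb{F}_1\times\mathbb{F}_2\times 0\times\cdots\times 0\sim X_2\sim X_1^{c}$. You instead reprove the bound from scratch via the support calculus: a two-path through $I\cap J$ when the supports meet, through $I+J$ when their union is proper, and the three-edge detour $I\sim X_{i_0}\sim (X_{i_0}+J)\sim J$ in the complementary-support case (where the hypothesis $n\geq 3$ is used to choose $i_0$ with $X_{i_0}\subsetneq I$ while keeping $X_{i_0}+J$ proper). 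This costs you a page of case analysis but buys a sharper statement the paper later relies on without proof: your cases show that $d(I,J)=3$ happens exactly when $J=I^{c}$, which is precisely the parenthetical claim ``$d(I,J)=3$ iff $J=I^{c}$'' invoked in Claim 1 of the proof of Theorem \ref{dimprod}. So your route is self-contained and slightly stronger, at the price of not leaning on the cited connectivity theorem.
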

\begin{proof}
{By \cite[Theorem\,  2.1]{Akbari}, $In(R)$ is a connected graph and $diam(In(R))\leq 3$. Now, let  $J= X_1^{c}$.  Since $X_1 \nsim J$, $d(X_1 ,J)\geq 2$. If $d(X_1 ,J)=2$, then there exists a vertex $T$ such that $X_1 \sim T \sim J$ is the shortest path between $X_1 $ and $J$. Since  $X_1 \sim T$, $X_1 \subset T$ or $T\subset X_1 $. If $T\subset X_1 $, then clearly $T \nsim J$. Thus $X_1 \subset T$. A similar argument shows that $J\subset T$, which means $T=R$, a contradiction. Thus $X_1 \sim \mathbb{F}_{1}\times \mathbb{F}_{2}\times 0\times \cdots \times 0 \sim X_{2}\sim J$ is the shortest path between $X_1 $ and $J$, and so $diam(In(R))=3$.}
\end{proof}

%%%%%%%%%%%%%%%%%%%%%%%%%%%%%%%%%%%%%%%%%%%%%%%%%%%%%%%%%%%%%%%%%%%%%%%%%%%%%%%%%%%%%%%%%%%%%%%%%%%%%%%%%%%%%%%%%%%%%%%%%%%%%%%%%%%%%%%%%%%%%%%%%%%%

\begin{thm}\label{dimprod}
Suppose that $R\cong \prod_{i=1}^{n}\mathbb{F}_{i}$,  where $\mathbb{F}_{i}$ is a field for every $1\leq i \leq n$ and $n\geq 3$ be a positive integer.  Then the following statements hold:

$1)$  If $n \leq 4$, then $dim_M(In(R))=n-1$.\\

$2)$  If $n \geq 5$, then $dim_M(In(R))=n$.
\end{thm}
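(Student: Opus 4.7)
My plan is to establish upper and lower bounds on $dim_M(In(R))$ for each case separately. For the upper bound, I take $W = \{X_1, \ldots, X_{n-1}\}$ in case (1) and $W = \{X_1, \ldots, X_n\}$ in case (2), and use the distance formula implicit in the argument of Lemma \ref{lemma2d}: $d(I_A, X_i) = 1$ when $i \in A$ and $|A| \ge 2$, $d(I_A, X_i) = 3$ when $A = [n]\setminus\{i\}$, and $d(I_A, X_i) = 2$ otherwise. In case (2), $D(I_A\mid W)$ determines $A$ uniformly: a unique coordinate equal to $3$ at position $i$ pins down $A = [n]\setminus\{i\}$, while a vector of $1$s and $2$s has its $1$-coordinates at exactly the elements of $A$. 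In case (1) a direct tabulation of the $2^n - n - 1$ non-$W$ representation vectors verifies injectivity.

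For the lower bound, case (1) is handled by a pigeonhole argument. A $k$-subset $W$ leaves $2^n - 2 - k$ vertices outside, each receiving a vector in $\{1,2,3\}^k$. For $n = 3$ and $k = 1$ one has $5 > 3$, forcing a collision; for $n = 4$ and $k = 2$ one has $12 > 9$, again forcing a collision. Hence no resolving set of size smaller than $n - 1$ exists in case (1).

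The main obstacle is the lower bound in case (2), namely $dim_M(In(R)) \ge n$ for $n \ge 5$: here pigeonhole fails since $3^{n-1} > 2^n - n - 1$, and a structural argument is needed. My plan is to assume a resolving set $W = \{I_{S_1}, \ldots, I_{S_{n-1}}\}$ of size $n - 1$ and exhibit two non-trivial subsets $A \neq B$, both outside $W$, with $D(I_A\mid W) = D(I_B\mid W)$, yielding a contradiction. The key observation is that $d(I_A, I_S) = d(I_B, I_S) = 2$ whenever $S$ is incomparable and non-complementary to both $A$ and $B$; so a member $S_j$ of $W$ separates the pair $\{A, B\}$ only when $S_j$ is a proper subset, proper superset, or complement of exactly one of $A, B$. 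Partitioning the $S_j$ by $|S_j|$ and exploiting the anti-automorphism $I_A \mapsto I_{A^c}$ of $In(R)$ to cut the symmetry, I expect to show that $n - 1$ members of $W$ cannot simultaneously cover all pairs of the form $(I_{\{i,j\}}, I_{\{i,j,k\}})$ together with the singleton pairs $(X_i, X_j)$ when $n \ge 5$. The combinatorial heart of the argument is the explicit construction of a colliding witness $(A, B)$ depending on the multiset of sizes of the $S_j$ and on which singletons and cosingletons actually appear in $W$.
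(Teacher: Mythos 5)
Your upper bounds and the case $n\le 4$ lower bounds are correct and essentially coincide with the paper's argument (the paper treats $n=3$ by noting $In(R)=C_6$ and $n=4$ by the same count $12>3^2$, then tabulates the vectors for $W=\{X_1,X_2,X_3\}$). The genuine gap is the lower bound $dim_M(In(R))\ge n$ for $n\ge 5$. You correctly observe that the naive pigeonhole comparison of $3^{n-1}$ with $2^n-n-1$ fails, but what you offer in its place is only a plan (``I expect to show\ldots''), and its combinatorial heart --- producing a colliding pair $(A,B)$ for an \emph{arbitrary} $(n-1)$-element set $W$, case-split by the multiset of sizes of the $S_j$ and by which singletons and cosingletons occur --- is precisely the part you have not carried out. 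That covering argument is far from routine: $W$ need not contain any singleton or cosingleton at all, and verifying that $n-1$ arbitrary sets cannot separate all the pairs you list would require substantial additional work that the proposal does not supply.

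The idea you are missing is that pigeonhole does not fail; it only needs to be refined using the distance structure you have already written down. Since $d(I,J)=3$ if and only if $J=I^{c}$ (this is the content of Lemma \ref{lemma2d} and is restated in the paper's Claim 1), a coordinate of $D(I\mid W)$ equal to $3$ in position $j$ forces $I=W_j^{c}$. Hence at most $k$ vertices outside $W$ have a $3$ anywhere in their representation vector, and every other vertex must have its vector in $\{1,2\}^{k}$. This gives $2^{n}-2-2k\le 2^{k}$, i.e. $2^{n}\le 2^{k}+2k+2$, which for $k=n-1$ reads $2^{n-1}\le 2n$ and fails exactly when $n\ge 5$. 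This three-line counting argument is the paper's proof of Claim 1 and replaces your entire proposed structural construction; as it stands, your write-up does not prove part (2).
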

\begin{proof}
{$(1)$ 
If $n=3$, then $In(R)=C_6$ and thus $dim_M(In(R))=2$.\\
\unitlength=1.5mm
\begin{center}
\begin{figure}[H]
%\begin{picture}
 	
\begin{minipage}{1.2\textwidth}
\begin{center}
\begin{tikzpicture}
  [scale=0.25,every node/.style={circle,fill=black,inner sep=0pt},very thick]
  \node [fill=red, label=above:{\tiny $X_{1}$},text width=2mm] (n1) at (0,5) {};
  \node [label=above:{\tiny $X_{2}$},text width=2mm] (n2) at (16,5)  {};
  \node [label=below:{\tiny $X_{3}$},text width=2mm] (n3) at (8,0)  {};
  \node [label=below:{\tiny $X_{1}^{c}$},text width=2mm]  (n4) at (16,0)  {};
  \node [fill=red, label=below:{\tiny $X_{2}^{c}$},text width=2mm] (n5) at (0,0)  {};
  \node [label=above:{\tiny $X_{3}^{c}$},text width=2mm] (n7) at (8,5)  {};
 % \node [label=above:{\tiny $(6,0)$},text width=2mm] (n6) at (10,5)  {};
  %\node [label=below:{\tiny $(2,0)$},text width=2mm] (n8) at (10,-5)  {};
  %\node [label=above:{\tiny $(4,1)$},text width=2mm] (n10) at (15,0)  {};
  %\node [label=above:{\tiny $(2,1)$},text width=2mm] (n9) at (15,5)  {};
  %\node [label=below:{\tiny $(6,1)$},text width=2mm] (n11) at (15,-5)  {};
  \foreach \from/\to in {n2/n4,n4/n3,n3/n5,n5/n1,n1/n7,n7/n2}
    \draw (\from) -- (\to);
\end{tikzpicture}
\end{center}

 \caption{$ In(R) $} \label{figure:fr}
\end{minipage}
\end{figure}
\end{center}

If $n=4$, then since $diam(In(R))=3$ and $|V(In(R))|=14$, we have  $dim_M(In(R))\geq 3$.
Now we show that  $dim_M(In(R))\leq 3$.

If we put $W=\lbrace X_{1},X_{2},X_{3} \rbrace$, then we get\\
$$ D(\mathbb{F}_{1}\times \mathbb{F}_{2}\times \mathbb{F}_{3}\times 0)|W)=(1,1,1) \,\,\,\,\  D(\mathbb{F}_{1}\times \mathbb{F}_{2}\times 0 \times \mathbb{F}_{4})|W)=(1,1,3) \,\,\,\,\   D(\mathbb{F}_{1}\times  0 \times \mathbb{F}_{3} \times \mathbb{F}_{4})|W)=(1,3,1)  $$
$$ D(0 \times \mathbb{F}_{2}\times \mathbb{F}_{3} \times \mathbb{F}_{4})|W)=(3,1,1) \,\,\,\,\  D(\mathbb{F}_{1}\times \mathbb{F}_{2}\times 0 \times0)|W)=(1,1,2) \,\,\,\,\   D(\mathbb{F}_{1}\times 0 \times \mathbb{F}_{3}\times 0 )|W)=(1,2,1)  $$
$$ D(0\times \mathbb{F}_{2}\times \mathbb{F}_{3}\times 0 )|W)=(2,1,1) \,\,\,\,\  D(\mathbb{F}_{1}\times 0 \times 0 \times \mathbb{F}_{4})|W)=(1,2,2) \,\,\,\,\   D(0\times \mathbb{F}_{2}\times 0 \times \mathbb{F}_{4})|W)=(2,1,2)  $$
$$ D(0 \times 0 \times \mathbb{F}_{3} \times \mathbb{F}_{4})|W)=(2,2,1)  \,\,\,\,\  D(0 \times 0 \times 0 \times \mathbb{F}_{4})|W)=(2,2,2).$$
This shows that $W$ is a resolving set for $In(R)$ and hence $dim_M(In(R))\leq 3$.

$(2)$ We show that $dim_M(In(R))=n$, for every $n\geq 5$. Indeed, we have the following claims:

\textbf{Claim 1.}
$dim_M(In(R))\geq n$.\\
Proposition \ref{dimfinite} shows that  $dim_M(In(R))$ is finite. Let
  $W=\lbrace W_1,\ldots,W_k\rbrace$ be a metric basis for $In(R)$, where $k$ is a positive integer. By  Lemma \ref{lemma2d}, $d(I,I^c)=3$, for every $I\in V(In(R))$ (Indeed,  $d(I, J)=3$ iff $J=I^c$). Hence there is at most one 3 in $D(I|W)$ and for the other components there are $2$ possibilities,  for each $I\in V(In(R))$. Thus if $I^c\in W$, then there are $2^{k-1}$ possibilities for $D(I|W)$ and otherwise there are $2^k$ possibilities, for every $X\in V(In(R))\setminus W$. Since $|V(In(R))|=2^{n}-2$ and $|V(In(R))|-2k \leq 2^k$, we have $ 2^{n}\leq 2^{k}+2k+2$. Since $n\geq 5$,  we conclude that $k\geq n$. Therefore $dim_M(In(R))\geq n$.

\textbf{Claim 2.}
$dim_M(In(R))\leq n$.\\
Let  $W=\{X_1,\ldots,X_n\}$, where $X_i$ is the i-th minimal ideal, for every $1\leq i\leq n$ (see Remark \ref{dimfin}).
We show that $W$ is a resolving set for $In(R)$. Let $I,J\in V(In(R))\setminus W$  and $I\neq J$. We show that $D(I|W) \neq D(J|W)$. Since $diam(In(R))=3$, we consider the following cases:

$(a)$  $I=X_i^c$, for some $1\leq i \leq n$. In this case, each component of $D(I|W)$ is $1$ if and only if the corresponding component in $I$ is a field. Moreover, the $i$-th component of $D(I|W)$ is $3$.

$(b)$  $I\neq X_i^c$, for every $1\leq i \leq n$.
In this case each component of $D(I|W)$ is $1$ if and only if the corresponding component in $I$ is a field, and each component of $D(I|W)$ is $2$ if and only if the corresponding component in $I$ is zero.

By cases $(a)$ and  $(b)$, if $I\neq J$, then $D(I|W)\neq D(J|W)$. Thus $W$ is a resolving set for $In(R)$. Therefore $dim_M(In(R))\leq n$.

By Claims $1,2$, $dim_M(In(R))=n$, for $n\geq 5$.
}
\end{proof}

The next goal of this section is to find $sdim(In(R))$, where $R$ is a direct product of finitely many fields. To this end, we need a series of lemmas.

\begin{lem}\label{Oellermann} {\rm (\cite[Theorem 2.1]{oller})} For any connected graph $G$, $sdim_M(G)=\alpha(G_{SR})$.
\end{lem}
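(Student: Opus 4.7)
The strategy is to establish a stronger set-level equivalence: a subset $W\subseteq V(G)$ is a strong resolving set of $G$ if and only if $W$ is a vertex cover of the strong resolving graph $G_{SR}$. Taking minimum cardinalities on both sides then yields $sdim(G)=\alpha(G_{SR})$.

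For the forward direction, I would suppose $W$ is a strong resolving set and fix an edge $uv\in E(G_{SR})$, so that $u$ and $v$ are mutually maximally distant. Assuming for contradiction that $u,v\notin W$, pick $w\in W$ that strongly resolves the pair. Without loss of generality $d(w,u)=d(w,v)+d(v,u)$, so $v$ is an internal vertex of a shortest $w$-to-$u$ path (note $w\neq v$ since $v\notin W$). Let $v'$ be the neighbor of $v$ on this path lying strictly between $v$ and $u$; then $d(v',u)=d(v,u)-1$ while the neighbor of $v$ on the $w$-side, call it $v''$, satisfies $d(v'',u)=d(v,u)+1$. The vertex $v''\in N(v)$ then violates $d(u,v'')\leq d(u,v)$ required by mutual maximal distance. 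Hence every edge of $G_{SR}$ meets $W$.

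For the reverse direction, let $W$ be a vertex cover of $G_{SR}$ and fix distinct $u,v\in V(G)$. Starting at $u$, greedily move to a neighbor strictly farther from $v$ and iterate; when the process halts it produces a vertex $u^\ast$ with $d(v,w')\leq d(v,u^\ast)$ for every $w'\in N(u^\ast)$, and because each step extends a geodesic from $v$, $d(u^\ast,v)=d(u^\ast,u)+d(u,v)$. Running the symmetric process from $v$, moving strictly away from $u^\ast$, produces $v^\ast$ with the analogous property $d(u^\ast,w')\leq d(u^\ast,v^\ast)$ for every $w'\in N(v^\ast)$ and $d(v^\ast,u^\ast)=d(v^\ast,v)+d(v,u^\ast)$. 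A short triangle-inequality check
$$d(v^\ast,w')\leq d(v^\ast,v)+d(v,w')\leq d(v^\ast,v)+d(v,u^\ast)=d(v^\ast,u^\ast)$$
confirms that the maximality of $u^\ast$ is preserved when $v$ is replaced by $v^\ast$, so $u^\ast$ and $v^\ast$ are mutually maximally distant and $u^\ast v^\ast\in E(G_{SR})$. Since $W$ covers this edge, one of $u^\ast,v^\ast$ lies in $W$; the geodesic identity displayed above then shows immediately that this vertex strongly resolves the original pair $u,v$.

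The main technical obstacle will be this sufficiency direction: one must carry out two successive greedy extensions and verify that $u^\ast$ remains maximally distant from the \emph{final} endpoint $v^\ast$, not merely from $v$. The verification is a three-line triangle-inequality argument, but it is the only subtle step and is easy to miss when writing out the proof.
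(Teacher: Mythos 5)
This lemma is not proved in the paper at all: it is quoted as Theorem 2.1 of Oellermann and Peters--Fransen \cite{oller}, so there is no internal argument to compare against. Your proposal supplies a correct, self-contained proof, and it follows the standard route from that reference: strong resolving sets are precisely the sets meeting every edge of $G_{SR}$, and vertex covers of $G_{SR}$ are strong resolving sets, whence the two minima coincide. Both directions check out. In the necessity direction, the neighbour $v''$ of $v$ on the $w$-side of the geodesic satisfies $d(u,v'')=d(u,v)+1$, contradicting mutual maximal distance, and your observation that $w\neq v$ because $v\notin W$ is exactly the point that makes $v$ internal. In the sufficiency direction the greedy extension is legitimate because each step increases $d(v,\cdot)$ by exactly $1$, which combined with the triangle inequality forces $d(v,u^{\ast})=d(v,u)+d(u,u^{\ast})$, and your displayed three-line estimate correctly shows that $u^{\ast}$ stays maximally distant from $v^{\ast}$. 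The only compressed spot is the last sentence: if the covering vertex is $v^{\ast}$ rather than $u^{\ast}$, strong resolution of $u,v$ is not literally ``immediate'' from the displayed inequality; one must note that concatenating the geodesics from $v^{\ast}$ to $v$, from $v$ to $u$, and from $u$ to $u^{\ast}$ yields a geodesic from $v^{\ast}$ to $u^{\ast}$, so its initial segment gives $d(v^{\ast},u)=d(v^{\ast},v)+d(v,u)$. Adding that one line makes the argument airtight.
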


\begin{lem}\label{Gallai}
{\rm (Gallai$^{^,}$s theorem)} For any graph $G$ of order $n$, $\alpha(G)+\beta(G)=n$.
\end{lem}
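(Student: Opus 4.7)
The plan is to prove Gallai's theorem via the standard complementation duality between independent sets and vertex covers: a set $S \subseteq V(G)$ is independent if and only if $V(G) \setminus S$ is a vertex cover. This single observation is essentially the whole proof; the remainder is bookkeeping of cardinalities.

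First I would verify the duality. Suppose $S$ is independent, so no edge of $G$ has both endpoints in $S$. Then every edge has at least one endpoint in $V(G)\setminus S$, i.e.\ $V(G)\setminus S$ is a vertex cover. Conversely, if $C$ is a vertex cover, then no edge can have both endpoints in $V(G)\setminus C$, so $V(G)\setminus C$ is independent. Thus complementation is a bijection between the collection of independent sets and the collection of vertex covers, and this bijection preserves the sum $|S|+|V(G)\setminus S|=n$.

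Next I would extract the numerical consequence. Let $S_0$ be a maximum independent set, so $|S_0|=\beta(G)$. By the duality, $V(G)\setminus S_0$ is a vertex cover of size $n-\beta(G)$, giving $\alpha(G)\le n-\beta(G)$. Conversely, let $C_0$ be a minimum vertex cover, so $|C_0|=\alpha(G)$. Then $V(G)\setminus C_0$ is an independent set of size $n-\alpha(G)$, giving $\beta(G)\ge n-\alpha(G)$, i.e.\ $\alpha(G)\ge n-\beta(G)$. Combining the two inequalities yields $\alpha(G)+\beta(G)=n$.

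There is no real obstacle here: the entire argument hinges on the trivial but crucial fact that ``every edge has an endpoint outside $S$'' and ``no edge lies entirely outside $C$'' are literally the same condition on $S=V\setminus C$. The only thing to be careful about is the notational convention of the paper, where $\beta(G)$ denotes the independence number and $\alpha(G)$ the vertex cover number (opposite to some sources); with this convention the displayed identity is exactly the statement just derived.
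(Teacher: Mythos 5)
Your proof is correct and complete: the complementation duality between independent sets and vertex covers is exactly the standard argument for Gallai's theorem, and you have handled the paper's (reversed) convention that $\beta$ denotes the independence number and $\alpha$ the vertex cover number. The paper states this lemma without proof as a classical result, so there is nothing further to compare against.
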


\begin{lem}\label{lemma2g}
Let  $n\geq 3$ be a positive integer and $R\cong \prod_{i=1}^{n}\mathbb{F}_{i}$, where $\mathbb{F}_{i}$ is a field for every $1\leq i \leq n$. Then the following statements hold.\\
$1)$ $V(In(R))=V(In(R)_{SR})$.\\
$2)$ Suppose that $I, J \in V(In(R)_{SR})$, then  $IJ \in E(In(R)_{SR})$ if and only if $I=J^{c}$ or $IJ, IJ^{c} \notin E(In(R)).$
\end{lem}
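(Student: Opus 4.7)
The plan is to work with the natural description of $In(R)$ via subsets: each non-trivial ideal $I = I_1 \times \cdots \times I_n$ is identified with the proper non-empty subset $S_I = \{i : I_i = \mathbb{F}_i\}$ of $\{1, \ldots, n\}$, inclusion of ideals corresponds to inclusion of subsets, and $I^c$ corresponds to $\{1,\ldots,n\} \setminus S_I$. Combining this with Lemma \ref{lemma2d}, I obtain the full distance classification: $d(I, J) = 1$ iff $S_I$ and $S_J$ are comparable; $d(I, J) = 3$ iff $J = I^c$; and $d(I, J) = 2$ in every remaining case. A key consequence I will use repeatedly is that $d(w, X) = 3$ holds iff $w = X^c$.

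For part $(1)$, Lemma \ref{lemma2d} gives $d(I, I^c) = 3 = \operatorname{diam}(In(R))$ for every vertex $I$. Hence every neighbor $w$ of $I$ satisfies $d(w, I^c) \leq 3 = d(I, I^c)$, and symmetrically for neighbors of $I^c$. Thus $I$ and $I^c$ are mutually maximally distant, so $V(In(R)) \subseteq V(In(R)_{SR})$; the reverse inclusion is automatic.

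For part $(2)$, I split into cases according to $d(I, J)$. If $J = I^c$, the diameter argument from part $(1)$ immediately gives MMD, while $IJ \notin E(In(R))$ (incomparable ideals) and $IJ^c \notin E(In(R))$ (since $J^c = I$ would require a loop), so both sides of the claimed equivalence hold. If $I \sim J$ in $In(R)$, assume without loss of generality $S_I \subsetneq S_J$; picking $a \in \{1,\ldots,n\} \setminus S_J$ and letting $K$ be the vertex with $S_K = S_I \cup \{a\}$, one checks that $K \sim I$, that $K$ is incomparable with $J$, and that $K \neq J^c$ (because $\emptyset \neq S_I \subseteq S_K$ while $S_I \cap S_{J^c} = \emptyset$); since $I$ is then a common neighbor of $K$ and $J$, we obtain $d(K, J) = 2 > 1 = d(I, J)$, so $I$ and $J$ fail to be MMD, matching the claim. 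Finally, if $I$ and $J$ are incomparable with $J \neq I^c$, then $d(I, J) = 2$, so the MMD conditions are violated precisely when some neighbor of $I$ is at distance $3$ from $J$ or some neighbor of $J$ is at distance $3$ from $I$, i.e.\ when $I \sim J^c$ or $J \sim I^c$; these two conditions are equivalent (take complements on both sides of either containment), so MMD in this subcase reduces to $IJ^c \notin E(In(R))$, yielding the asserted equivalence.

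The chief technical point is the construction in the case $I \sim J$: one must verify that $K$ is a bona fide non-trivial ideal (ensured since $|S_K|$ lies between $2$ and $n-1$, using $S_I \subsetneq S_J \subsetneq \{1,\ldots,n\}$) and, crucially, that $K \neq J^c$, so that $d(K, J)$ equals $2$ rather than the diameter $3$ and thus actually produces a violation of the MMD inequality.
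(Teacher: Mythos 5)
Your proof is correct and follows essentially the same route as the paper: part (1) via $d(I,I^{c})=3=\mathrm{diam}(In(R))$, and part (2) via the observation that $J^{c}$ is the unique vertex at distance $3$ from $J$, so mutual maximal distance at distance $2$ fails exactly when $I\sim J^{c}$ (equivalently $J\sim I^{c}$). The only real difference is that where the paper dismisses the adjacent case with ``clearly $I\nsim J$'', you supply the explicit witness $K$ with $S_{K}=S_{I}\cup\{a\}$ — a welcome but inessential elaboration.
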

\begin{proof}
{
$1)$ For every $I=I_{1}\times\cdots \times I_{n}\in V(In(R))$, Lemma \ref{lemma2d} implies that $d(I,I^{c})=3=diam(In(R))$. Thus $I, I^{c}$ are mutually maximally distant and so $I\in V(In(R)_{SR})$ i.e., $V(In(R))=V(In(R)_{SR}).$\\
$2)$ First suppose that  $I=J^{c}$ or $IJ, IJ^{c} \notin E(In(R))$. If $I=J^{c}$, then obviously $IJ\in E(In(R)_{SR})$. Hence one may suppose that $I\neq J^{c}$ and $IJ, IJ^{c} \notin E(In(R))$. Since $IJ\notin E(In(R))$ and $I\neq J^{c}$,  $d_{In(R)}(I,J)=2$. Also  $I\nsim J^{c}$ implies that $d(V,J)\leq d(I,J)$, for every $V\in N(I)$. Moreover, $d(U,I)\leq d(I,J)$, for every $U\in N(J)$. Therefore, $I, I^{c}$ are mutually maximally distant, thus $IJ \in E(In(R)_{SR})$.\\
Conversely, suppose that $IJ \in  E(In(R)_{SR}$, for some $I, J \in V(In(R)_{SR})$ and $I\neq J^{c}$. Then clearly $I\nsim J$ and if $I\sim J^{c}$, then $d_{In(R)}(J,J^{c})=3 > d(I,J)$, and so $I, I^{c}$ are not mutually maximally distant, a contradiction. This completes the proof.
}
\end{proof}

\begin{lem}\label{dimprod2}
Let  $n\geq 3$ be a positive integer and $R\cong \prod_{i=1}^{n}\mathbb{F}_{i}$, where $\mathbb{F}_{i}$ is a field for every $1\leq i \leq n$. Then
$In(R)_{SR}= H + \underbrace{K_{2}+ \cdots +K_{2}}_{n\, times}$, where $H$ is a connected graph.
\end{lem}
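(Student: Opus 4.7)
The plan is to apply Lemma \ref{lemma2g} to describe the edges of $In(R)_{SR}$ explicitly, and then to decompose $V(In(R))$ into the $n$ pairs $\{X_i, X_i^c\}$ together with the set of \emph{interior} vertices that will form $V(H)$. It is convenient to identify each $I = I_1 \times \cdots \times I_n \in V(In(R))$ with its support $S(I) = \{j : I_j = R_j\}$, a proper non-empty subset of $\{1, \ldots, n\}$; then $I \subsetneq J$ in $In(R)$ iff $S(I) \subsetneq S(J)$, and $S(I^c) = \{1, \ldots, n\} \setminus S(I)$. Under this correspondence, $X_i$ has support $\{i\}$, $X_i^c$ has support $\{1, \ldots, n\} \setminus \{i\}$, and $V(H)$ consists of those $I$ with $2 \leq |S(I)| \leq n - 2$.

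First I would verify that each pair $\{X_i, X_i^c\}$ is a $K_2$ component of $In(R)_{SR}$. The edge $X_i X_i^c$ belongs to $E(In(R)_{SR})$ by the $I = J^c$ clause of Lemma \ref{lemma2g}(2). To see that $X_i$ has no other neighbor, take any $J \in V(In(R)) \setminus \{X_i, X_i^c\}$: if $i \in S(J)$, then $\{i\} \subsetneq S(J)$ (as $J \neq X_i$), giving $X_i \sim J$ in $In(R)$; if $i \notin S(J)$, then $\{i\} \subsetneq S(J^c)$ (as $J \neq X_i^c$), giving $X_i \sim J^c$ in $In(R)$. Either way Lemma \ref{lemma2g}(2) rules out $X_i J$ as an edge of $In(R)_{SR}$. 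A symmetric argument treats $X_i^c$. Hence the $n$ pairs contribute exactly $n$ disjoint copies of $K_2$.

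Next I would show that the subgraph $H$ induced on the interior vertices is connected. For $n = 3$ no interior vertices exist and the claim is vacuous. For $n \geq 4$, the strategy is to show that any two size-$2$ ideals are joined by a short path in $H$, and that every vertex with larger support attaches to some size-$2$ ideal. For the first, Lemma \ref{lemma2g}(2) produces an edge between distinct size-$2$ ideals $I, J$ with $|S(I) \cap S(J)| = 1$: the supports are incomparable, their intersection is non-empty, and their union has size $3 \leq n - 1$. Two disjoint $2$-subsets $\{a, b\}, \{c, d\}$ are then joined through $\{a, c\}$, so the size-$2$ ideals induce a connected subgraph of $H$. For the second, given $I \in V(H)$ with $|S(I)| = k \geq 3$, choose $a \in S(I)$ and $b \notin S(I)$ and let $J$ be the size-$2$ ideal with $S(J) = \{a, b\}$. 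Then $S(I), S(J)$ are incomparable, $S(I) \cap S(J) = \{a\}$, and $|S(I) \cup S(J)| = k + 1 \leq n - 1$, so Lemma \ref{lemma2g}(2) yields $IJ \in E(In(R)_{SR})$. Combining these two facts gives connectedness of $H$.

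The main obstacle is the case-by-case verification of the hypotheses of Lemma \ref{lemma2g}(2) for each claimed edge, and in particular the separate treatment of size-$2$ ideals when $n = 4$ versus $n \geq 5$: in the former, two disjoint $2$-subsets are complementary and hence adjacent by the $I = J^c$ clause, while in the latter they are not adjacent directly and must be linked through an intermediate $2$-subset. With the support dictionary in hand, these checks reduce to routine set-theoretic bookkeeping.
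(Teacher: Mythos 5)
Your proposal is correct and follows essentially the same route as the paper: both use Lemma \ref{lemma2g} to identify each pair $\{X_i, X_i^{c}\}$ as an isolated $K_2$ and then show that the induced subgraph on the remaining vertices is connected. If anything, your connectedness argument (routing every interior vertex to the mutually interconnected size-$2$ supports, with the $n=4$ complementary-pair case handled separately) is more explicit than the paper's, which merely asserts that non-adjacent interior vertices are at distance $2$ in $H$ without exhibiting a common neighbour.
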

\begin{proof}
{By Lemma \ref{lemma2g}, $V(In(R))=V(In(R)_{SR})$. Assume that $I=I_{1}\times\cdots \times I_{n}\in V(In(R)_{SR})$. Let $NZC(I)$ be the number of zero components in $I$. Obviously, $1 \leq NZC(I) \leq n-1$. Assume that \\
$A_{1}=\lbrace I \in V(In(R)_{SR})| NZC(I)=1\rbrace$,\\
$A_{2}=\lbrace I \in V(In(R)_{SR})| NZC(I)=2\rbrace$,\\
\vdots
\\
and $A_{n-1}=\lbrace I \in V(In(R)_{SR})| NZC(I)=n-1\rbrace$.\\
Let $I \in A_{1}$. Then for every $J\neq I^{c}$ with $IJ \notin E(In(R))$ we have $I^{c} \subset J$ and so by Lemma \ref{lemma2g}, $IJ \notin E(In(R)_{SR}$. Similarly, for every $I \in A_{n-1}$ the only vertex maximally distant from $I$ is  $I^{c}$ and vice versa, so  $I$ is only mutually maximally distant from $I^{c}$. Since $|A_{1}|=|A_{n-1}|=n$, $In(R)_{SR}$ contains $n$ copies of $K_{2}$.\\
Now, we show that $H= In(R)_{SR}[S]$ is a connected graph. For every two distinct vertices $I, J \in S=V(In(R)_{SR})\setminus (A_{1} \cup A_{n-1})$, if $I \nsim J$ and $I\nsim J^{c}$  in $In(R)$, then $d_{H}(I,J)= 1$ , else $d_{H}(I,J)= 2$, so $H= In(R)_{SR}[S]$ is a connected graph.}
\end{proof}

The next example explains Lemma \ref{dimprod2} in case $n=4$.

\begin{example}
{{\rm
Suppose that $R\cong \prod_{i=1}^{4}\mathbb{F}_{i}$,  where $\mathbb{F}_{i}$ is a field for every $1\leq i \leq 4$. Thus $|V(In(R))|=14$. Let}\\
$ V_{1}=\mathbb{F}_{1}\times \mathbb{F}_{2}\times \mathbb{F}_{3}\times 0, \,\,\,\,\  V_{2}=\mathbb{F}_{1}\times \mathbb{F}_{2}\times 0 \times \mathbb{F}_{4}, \,\,\,\,\   V_{3}=\mathbb{F}_{1}\times  0 \times \mathbb{F}_{3} \times \mathbb{F}_{4},  $$\\ 
$$   V_{4}=0 \times \mathbb{F}_{2}\times \mathbb{F}_{3} \times \mathbb{F}_{4},\,\,\,\,\ V_{5}=\mathbb{F}_{1}\times \mathbb{F}_{2}\times 0 \times0, \,\,\,\,\ V_{6}=\mathbb{F}_{1}\times 0 \times \mathbb{F}_{3}\times 0,$$\\
$$ V_{7}=0\times \mathbb{F}_{2}\times \mathbb{F}_{3}\times 0, \,\,\,\,\    V_{8}=\mathbb{F}_{1}\times 0 \times 0 \times \mathbb{F}_{4}, \,\,\,\,\   V_{9}=0\times \mathbb{F}_{2}\times 0 \times \mathbb{F}_{4}, $$\\
$$ V_{10}=0 \times 0 \times \mathbb{F}_{3} \times \mathbb{F}_{4},  \,\,\,\,\  V_{11}=0 \times 0 \times 0 \times \mathbb{F}_{4},$$  
$$\,\,\ V_{12}=0 \times 0 \times \mathbb{F}_{3}\times 0 ,$
$  V_{13}=0 \times \mathbb{F}_{2}\times 0 \times 0,  \,\,\,\,\  V_{14}=\mathbb{F}_{1} \times 0 \times 0 \times 0$.
\unitlength=1.5mm
\begin{center}
\begin{figure}[H]
%\begin{picture}
 	
\begin{minipage}{.2\textwidth}
\begin{tikzpicture}
  [scale=0.25,every node/.style={circle,fill=black,inner sep=0pt},very thick]
  \node [label=above:{\tiny $V_{1}$},text width=2mm] (n1) at (0,5) {};
  \node [label=above:{\tiny $V_{2}$},text width=2mm] (n2) at (2,5)  {};
  \node [label=above:{\tiny $V_{3}$},text width=2mm] (n3) at (4,5)  {};
  \node [label=above:{\tiny $V_{4}$},text width=2mm]  (n4) at (6,5)  {};
  \node [label=left:{\tiny $V_{5}$},text width=2mm] (n5) at (-12,0)  {};
  \node [label=left:{\tiny $V_{6}$},text width=2mm] (n6) at (-8,0)  {};
  \node [label=left:{\tiny $V_{7}$},text width=2mm] (n7) at (-4,0)  {};
  \node [label=right:{\tiny $V_{8}$},text width=2mm] (n8) at (8,0)  {};
  \node [label=right:{\tiny $V_{9}$},text width=2mm] (n9) at (12,0)  {};
  \node [label=right:{\tiny $V_{10}$},text width=2mm] (n10) at (16,0)  {};
  \node [label=below:{\tiny $V_{11}$},text width=2mm] (n11) at (6,-5)  {};
  \node [label=below:{\tiny $V_{12}$},text width=2mm] (n12) at (4,-5)  {};
  \node [label=below:{\tiny $V_{13}$},text width=2mm] (n13) at (2,-5)  {};
  \node [label=below:{\tiny $V_{14}$},text width=2mm] (n14) at (0,-5)  {};
  \foreach \from/\to in {n1/n5,n1/n6,n1/n7,n1/n12,n1/n13,n1/n14,n2/n5,n2/n8,n2/n9,n2/n11,n2/n13,n2/n14,n3/n6,n3/n8,n3/n10,n3/n11,n3/n12,n3/n14,n4/n7,n4/n9,n4/n10,n4/n11,n4/n12,n4/n13, n5/n13,n5/n14,n6/n12,n6/n14,n7/n12,n7/n13,n8/n11,n8/n14,n9/n11,n9/n13,n10/n11,n10/n12}
    \draw (\from) -- (\to);
\end{tikzpicture}
\end{minipage}
\hspace{5cm}
\begin{minipage}{.25\textwidth}
\begin{tikzpicture}
  [scale=0.25,every node/.style={circle,fill=black,inner sep=0pt},very thick]
  \node [label=above:{\tiny $V_{1}$},text width=2mm] (n1) at (24,3) {};
  \node [label=above:{\tiny $V_{2}$},text width=2mm] (n2) at (26,3)  {};
  \node [label=above:{\tiny $V_{3}$},text width=2mm] (n3) at (28,3)  {};
  \node [label=above:{\tiny $V_{4}$},text width=2mm]  (n4) at (30,3)  {};
  \node [label=above:{\tiny $V_{5}$},text width=2mm] (n5) at (35,5)  {};
  \node [label=above:{\tiny $V_{6}$},text width=2mm] (n6) at (40,5)  {};
  \node [label=left:{\tiny $V_{7}$},text width=2mm] (n7) at (32,0)  {};
  \node [label=right:{\tiny $V_{8}$},text width=2mm] (n8) at (42,0)  {};
  \node [label=below:{\tiny $V_{9}$},text width=2mm] (n9) at (35,-5)  {};
  \node [label=below:{\tiny $V_{10}$},text width=2mm] (n10) at (40,-5)  {};
  \node [label=below:{\tiny $V_{11}$},text width=2mm] (n11) at (24,-3)  {};
  \node [label=below:{\tiny $V_{12}$},text width=2mm] (n12) at (26,-3)  {};
  \node [label=below:{\tiny $V_{13}$},text width=2mm] (n13) at (28,-3)  {};
  \node [label=below:{\tiny $V_{14}$},text width=2mm] (n14) at (30,-3)  {};

  \foreach \from/\to in {n1/n11,n2/n12,n3/n13,n4/n14,n5/n6,n5/n7,n5/n8,n5/n9,n5/n10,n6/n7,n6/n8,n6/n9,n6/n10,n7/n8,n7/n9,n7/n10,n8/n9,n8/n10,n9/n10/}
    \draw (\from) -- (\to);
%\draw (n2) to [bend right]  (n7);
\end{tikzpicture}
\end{minipage}

%\put (10,-13){\tiny{\rm $\Gamma(R) $}}
%\put (64,-13){\tiny{\rm $\Gamma(R)_{SR} $}}
%\end{picture}
 \caption{ $In(R)$ and $In(R)_{SR}$} \label{figure:fr1}
\end{figure}
\end{center}
}
\end{example}
Then  $In(R)$  and  $In(R)_{SR}$ are shown in Figure \ref{figure:fr1}.

\begin{lem}\label{dimprod3}
Let  $n\geq 3$ be a positive integer and $R\cong \prod_{i=1}^{n}\mathbb{F}_{i}$, where $\mathbb{F}_{i}$ is a field for every $1\leq i \leq n$. Then
$\beta(In(R)_{SR})= 2n-3$.
\end{lem}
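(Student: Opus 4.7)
The plan is to use Lemma~\ref{dimprod2}, which says $In(R)_{SR}$ is the disjoint union of the connected graph $H$ with $n$ copies of $K_2$. Since the independence number is additive across components,
\[
\beta(In(R)_{SR}) \;=\; \beta(H) + n\cdot \beta(K_2) \;=\; \beta(H) + n,
\]
so it suffices to prove $\beta(H)=n-3$. The lower bound is easy: the chain of ideals $I_k=\mathbb{F}_1\times\cdots\times\mathbb{F}_k\times 0\times\cdots\times 0$ for $k=2,\dots,n-2$ consists of $n-3$ members of $V(H)$ that are pairwise strictly comparable in $In(R)$, hence pairwise non-adjacent in $In(R)_{SR}$ by Lemma~\ref{lemma2g}(2).

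For the upper bound, let $\mathcal{A}\subseteq V(H)$ be any independent set, and identify each $I\in V(H)$ with its support $T_I=\{i : I_i=\mathbb{F}_i\}\subseteq [n]$, noting $2\le |T_I|\le n-2$. By Lemma~\ref{lemma2g}(2), $I,J\in \mathcal{A}$ being independent means $I\ne J^c$ together with one of $T_I\subseteq T_J$, $T_J\subseteq T_I$, $T_I\cap T_J=\emptyset$, or $T_I\cup T_J=[n]$; I will call such pairs \emph{comparable}, \emph{disjoint}, or \emph{covering} respectively. A short pair-by-pair case check shows that replacing any subset of members of $\mathcal{A}$ by their complements preserves independence: the three relations are permuted among each other under one- or two-sided complementation, and complementary pairs stay complementary. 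I use this freedom to form $\mathcal{A}'$ by replacing each $I$ with $|T_I|>n/2$ by $I^c$; then $|\mathcal{A}'|=|\mathcal{A}|$ and every support lies in $[2,\lfloor n/2\rfloor]$. The key claim is that $\mathcal{A}'$ is \emph{laminar}: a covering pair originally had $|T_I|+|T_J|\ge n+1$, so at least one side got flipped, destroying the cover; a disjoint pair had $|T_I|+|T_J|\le n-1$, so not both got flipped, which would have created a cover; and for a comparable pair $I\subsetneq J$ we have $|T_I|<|T_J|$, ruling out the ``smaller flipped, larger unflipped'' configuration that would yield a cover.

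It remains to bound $|\mathcal{A}'|$ for a laminar family on $[n]$ with members of size in $[2,\lfloor n/2\rfloor]$ and no complementary pair. Let $M_1,\dots,M_r$ be the $\subseteq$-maximal members of $\mathcal{A}'$; they are pairwise disjoint, so $\sum|M_i|\le n$, and at most one $|M_i|$ equals $n/2$ (else two maximal ones would be disjoint with sum $n$, hence complementary). A routine induction shows every laminar family on an $m$-set with all members of size $\ge 2$ has at most $m-1$ elements, so the subfamily of $\mathcal{A}'$ contained in $M_i$ has at most $|M_i|-1$ members, yielding
\[
|\mathcal{A}'| \;\le\; \sum_{i=1}^{r}(|M_i|-1) \;=\; \sum_{i=1}^{r}|M_i| - r.
\]
A short case analysis on $r$ then finishes: $r=1$ gives $\le n/2-1\le n-3$; $r\ge 3$ gives $\le n-r\le n-3$; and $r=2$ combined with the no-complementary-pair constraint forces $\sum|M_i|\le n-1$, hence $\le n-3$. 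I expect the main obstacle to be the clean verification that the size-based flipping rule simultaneously kills covers, avoids creating new ones from disjoint pairs, and avoids the bad one-sided flip of comparable pairs; once the laminarization trick is in place, the rest is a tidy structural count.
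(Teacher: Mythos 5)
Your proof is correct, and it takes a genuinely different route from the paper's for the hard direction. Both arguments start from Lemma~\ref{dimprod2} and the additivity $\beta(In(R)_{SR})=\beta(H)+n$, and both use the chain $\mathbb{F}_1\times\mathbb{F}_2\times 0\times\cdots\times 0,\ \dots,\ \mathbb{F}_1\times\cdots\times\mathbb{F}_{n-2}\times 0\times 0$ to get $\beta(H)\ge n-3$. For the upper bound, however, the paper partitions $V(H)$ into the levels $A_i$ according to the number of zero components, computes the largest independent set inside each level, and then argues only informally that the chain $W$ cannot be beaten: the printed justification checks that every vertex outside $W$ is adjacent to some member of $W$, which shows that $W$ is a \emph{maximal} independent set, not a \emph{maximum} one, and the comparison with mixed independent sets (some pairs comparable, some satisfying $I\sim J^c$) is left unexamined. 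Your complementation/laminarization argument replaces this with a complete structural count: non-adjacency in $In(R)_{SR}$ for a non-complementary pair is exactly ``supports comparable, disjoint, or covering''; this trichotomy is permuted by complementing members, so you may normalize every support to size at most $n/2$, after which a covering pair would force two complementary sets of size $n/2$ (excluded), hence the normalized family is laminar; the bound $\sum_i(|M_i|-1)$ for laminar families with all members of size at least $2$, together with the observation that $r=2$ with $|M_1|+|M_2|=n$ would again produce a complementary pair, gives $\beta(H)\le n-3$ rigorously. The only point worth making explicit is that flipping cannot identify two members of $\mathcal{A}$ or create a complementary pair --- both follow from the condition $I\neq J^c$ that independence already guarantees --- so indeed $|\mathcal{A}'|=|\mathcal{A}|$. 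In short, your approach delivers a watertight upper bound where the paper's argument is essentially a plausibility check; the price is the extra bookkeeping of the flipping rule and the laminar-family induction.
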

\begin{proof}
{
By Lemma \ref{dimprod2}, $In(R)_{SR}= H + \underbrace{K_{2}+ \cdots +K_{2}}_{n\, times}$, so $\beta(In(R)_{SR})= n + \beta (H)$. We show that $\beta (H)=n-3$.\\
By the proof of Lemma \ref{dimprod2}, $V(H)=\cup _{i=2}^{n-2}A_{i}$, where $A_{i}=\lbrace I \in V(In(R)_{SR})| NZC(I)=i\rbrace$.\\
Take the following facts into observation:

{\bf{Fact 1.}} Let $I,J\in A_i$, $I\neq J$ and $2 \leq i\leq n-2$. Then  since $NZC(I)=NZC(J)$, $IJ\notin E(In(R))$.

{\bf{Fact 2.}} Let $I,J\in A_i$, for some $2 \leq i\leq n-2$. If $I$ is not  adjacent to $J$  in $In(R)_{SR}$,  then by Fact (1) and  Lemma \ref{lemma2g}, $I\sim J^{c}$ in $In(R)$.

{\bf{Fact 3.}} Let $ i=\dfrac{n}{2}$, where $n$ is even. Then $In(R)_{SR}[A_i]$ is a complete graph, by Facts 1,2.

{\bf{Fact 4.}} Let $2 \leq i\leq [\dfrac{n}{2}]-1$, for even $n$ and $2 \leq i \leq [\dfrac{n}{2}]$, otherwise. Then $S_i\subseteq A_i$ is the largest subset of $A_i$ such that $IJ\notin E(In(R)_{SR}) $, for  every $I,J\in S_i$ (Indeed, $S_i$ is the largest independent subset of $A_i$ in $In(R){SR}[A_i]$). Then $|S_i|=[\dfrac{n}{i}]$. And symmetrically, for every $[\dfrac{n}{2}]+1 \leq i\leq n-2$, $S_i\subseteq A_i$ is the largest subset of $A_i$ such that $IJ\notin E(In(R)_{SR}) $, for  every $I,J\in S_i$ (Indeed, $S_i$ is the largest independent subset of $A_i$ in $In(R){SR}[A_i]$ ). Then $|S_i|=[\dfrac{n}{n-i}]$.\\
For every $I, J \in S_{i}$ (except for  $ i=\dfrac{n}{2}$, with even $n$ ), where $2 \leq i\leq n-2$, we have $I\sim J^{c}$ in $In(R)$. Thus for every $2 \leq j\neq i\leq n-2$ and for every $V\in S_{j}$, $V$ is adjacent to some vertices contained in $S_{i}$. In this case $|S_{2}|=|S_{n-2}|=[\dfrac{n}{2}]$ is the largest independent subset of $V(H)$. If some set $S^{\prime}$ is the largest independent subset of $V(H)$, then for every $I, J \in S^{\prime}$, either $I\sim J^{c}$  or $I\sim J$ in $In(R)$. First suppose that $S^{\prime}$ is the largest subset of $V(H)$ such that for every $I, J \in S^{\prime}$, $I\sim J^{c}$ in $In(R)$, then it is not hard to check that $|S^{\prime}|\leq |S_{2}|$.\\
Now let $W= \lbrace I_{1}= \mathbb{F}_{1}\times \mathbb{F}_{2}\times 0\times \cdots \times 0, I_{2}=\mathbb{F}_{1}\times \mathbb{F}_{2}\times \mathbb{F}_{3}\times 0 \times \cdots \times 0, \ldots, I_{n-3}= \mathbb{F}_{1}\times\cdots\times \mathbb{F}_{n-2}\times 0 \times 0 \rbrace$. Then $W$ is an independent subset of $V(H)$. For every $V \in V(H)\setminus W$, $V$ is adjacent to $I_{i}$ for some $1 \leq j\neq i\leq n-3$. Thus $W$ is the largest independent subset of $V(H)$. Since $|W|= n-3 \geq [\dfrac{n}{2}]$,  $\beta (H)=n-3$. Thus $\beta(In(R)_{SR})=n+\beta (H)= 2n-3$.  }
\end{proof}

Now, we are in a position to find $sdim(In(R))$.

\begin{thm}\label{dimprod4}
Suppose that $R\cong \prod_{i=1}^{n}\mathbb{F}_{i}$, where $\mathbb{F}_{i}$ is a field for every $1\leq i \leq n$ and $n\geq 3$ be a positive integer.  Then  $sdim(In(R))=2^{n}-2n +1$.
\end{thm}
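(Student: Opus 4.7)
The plan is to assemble the three preparatory results into a short calculation. By Lemma \ref{Oellermann}, computing $sdim(In(R))$ reduces to computing the independence number of the strong resolving graph $In(R)_{SR}$. By Gallai's identity (Lemma \ref{Gallai}) applied to $In(R)_{SR}$, this independence number equals $|V(In(R)_{SR})| - \beta(In(R)_{SR})$, and Lemma \ref{dimprod3} already gives us $\beta(In(R)_{SR}) = 2n - 3$. So all that remains is to pin down the vertex count of $In(R)_{SR}$.

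First I would invoke Lemma \ref{lemma2g}(1), which tells us $V(In(R)) = V(In(R)_{SR})$. Next I would count the non-trivial ideals of $R \cong \prod_{i=1}^{n} \mathbb{F}_{i}$: since each $\mathbb{F}_i$ is a field, every ideal of $R$ is of the form $I_1 \times \cdots \times I_n$ with each $I_i \in \{0, \mathbb{F}_i\}$, giving $2^n$ ideals total, and excluding the two trivial ones (the zero ideal and $R$ itself) leaves $|V(In(R))| = 2^n - 2$.

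Then the computation is immediate:
\begin{align*}
sdim(In(R)) &= \alpha(In(R)_{SR}) \quad \text{(by Lemma \ref{Oellermann})}\\
&= |V(In(R)_{SR})| - \beta(In(R)_{SR}) \quad \text{(by Lemma \ref{Gallai})}\\
&= (2^n - 2) - (2n - 3) \quad \text{(by Lemma \ref{lemma2g}(1) and Lemma \ref{dimprod3})}\\
&= 2^n - 2n + 1.
\end{align*}

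There is no real obstacle, since all the genuine work has been done in Lemma \ref{dimprod2} and Lemma \ref{dimprod3} (structure of $In(R)_{SR}$ as $H$ joined with $n$ copies of $K_2$, and the vertex cover computation). The only point one should be a little careful about is remembering that Lemma \ref{Oellermann} is stated for connected graphs, which applies here because $In(R)$ is connected by \cite[Theorem 2.1]{Akbari} (as already used in the proof of Lemma \ref{lemma2d}).
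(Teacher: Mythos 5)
Your proposal is correct and follows essentially the same route as the paper: apply Lemma \ref{Oellermann} and Gallai's identity to $In(R)_{SR}$, use $\beta(In(R)_{SR})=2n-3$ from Lemma \ref{dimprod3}, and subtract from $|V(In(R)_{SR})|=2^{n}-2$. Your added care in justifying the vertex count via Lemma \ref{lemma2g}(1) and the connectivity hypothesis of Oellermann's lemma is a welcome but inessential refinement.
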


\begin{proof}
{
By Lemma \ref{dimprod3}, $\beta(In(R)_{SR})=2n-3$. On the other hand, since
$|V(In(R)_{SR})|=2^{n}-2$, Gallai's theorem and Lemma \ref{Oellermann} show that $sdim(In(R))=|V(In(R)_{SR})|-\beta(In(R)_{SR}) =2^{n}-2n +1$.}
\end{proof}

\section{$dim_{M}(In(R))$ and $sdim(In(R))$, when $R$ is non-reduced}
 
%%%%%%%%%%%%%%%%%%%%%%%%%%%%%%%%%%%%%%%%%%%%%%%%%%%%%%%%%%%%%%%%%%%%%%%%%%%%%%%%%%%%%%%%%%%%%%%%%%%%%%%%%%%%%%%%%%%%%%%%%%%%%%%%%%%%%%%%%%%%%%%%%%%%%

As it has been mentioned in Section $2$, we consider rings $R$ with finitely many ideals. Then there exists positive integer $m$ such that $R\cong R_1\times\cdots \times R_m$, where $(R_{i},m_{i})$ is a local Artinian ring, for all $1\leq i \leq m$. If every $m_{i}$ is principal, then by \cite[Propostion 8.8]{ati}, every $R_{i}$ is a PIR with finitely many ideals. Moreover, ideals of every $R_{i}$ are totally ordered by inclusion. In this section, we compute $dim_{M}(In(R))$ and $sdim(In(R))$ for such rings $R$.

\begin{thm}\label{refe}
Suppose that  $R\cong \prod_{i=1}^{m}{R}_{i}$, where $R_{i}$ is a PIR non-field for every $1\leq i\leq m$ and  $m\geq 2$ is a positive integer. Then $dim_M(In(R))= (\sum_{i=1}^{m} {n}_{i}) + m-1$, where $n_{i}=|I(R_{i})|$ for every $1\leq i\leq m$.
\end{thm}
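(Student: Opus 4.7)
My plan is to follow the blueprint of Section~2, first nailing down the metric structure of $In(R)$ and then constructing matching upper and lower bounds on $\dim_M(In(R))$. First, I would extend Lemma~2.2 to the non-reduced setting: show $\mathrm{diam}(In(R))=3$ and characterize the distance-$3$ pairs. The general bound $\mathrm{diam}(In(R)) \le 3$ is due to Akbari et al. For two incomparable non-trivial ideals $I = (I_1,\ldots,I_m)$ and $J = (J_1,\ldots,J_m)$, $d(I, J) \le 2$ iff $I \cap J$ or $I + J$ is a non-trivial ideal of $R$. Since each $R_l$ is local, $I_l + J_l = R_l$ iff $\{I_l, J_l\} \ni R_l$, and since each $R_l$ is a PIR whose ideals are totally ordered by inclusion, $I_l \cap J_l = 0$ iff $\{I_l, J_l\} \ni 0$. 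Consequently $d(I, J) = 3$ forces $\{I_l, J_l\} = \{0, R_l\}$ for every $l$, so the only distance-$3$ pairs are complementary pairs in $M$, exactly as in the reduced case.

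For the upper bound, labelling the ideals of $R_l$ as $0 = \mathfrak{a}_l^0 \subsetneq \mathfrak{a}_l^1 \subsetneq \cdots \subsetneq \mathfrak{a}_l^{n_l+1} = R_l$, I would construct a resolving set of cardinality $(\sum_l n_l)+(m-1)$. A natural starting candidate is
\[
W = \{Y_{l,k}: 1 \le l \le m,\ 1 \le k \le n_l\} \cup \{Z_l : 1 \le l \le m-1\},
\]
where $Y_{l,k}$ is the monomial ideal with $\mathfrak{a}_l^k$ in position $l$ and $0$ elsewhere, and each $Z_l$ is a judiciously chosen coatom-type ideal such as $M_l = (R_1,\ldots,R_{l-1},\mathfrak{m}_l,R_{l+1},\ldots,R_m)$ or a variant adapted to remove pairwise ambiguities. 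The coordinates $d(I, Y_{l,k}) \in \{1,2\}$ (never $3$, since $Y_{l,k} \notin M$) recover the chain-position of each component $I_l$ up to the $\mathfrak{m}_l$-vs-$R_l$ distinction in the multi-component case, and the $Z_l$'s are to resolve that ambiguity in all but the last component, which is then forced by non-triviality of $I$. Verification reduces to a methodical case analysis based on the distance formula of the first step.

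For the lower bound $\dim_M(In(R)) \ge (\sum_l n_l) + m - 1$, I would work with \emph{consecutive} pairs: for each $l$ and each $0 \le k \le n_l - 1$, the pair of ideals that differ only in position $l$ by a single step in the chain of $R_l$ can be distinguished only by a vertex $V$ with $V_l = \mathfrak{a}_l^{k+1}$ and some $V_{l'} \neq R_{l'}$, or by one of the two ideals themselves. A pigeonhole/covering argument across these forced distinguisher-sets will contribute $\sum_l n_l$ vertices to any resolving set, while an additional $m-1$ emerges from distinguishing \emph{flip} pairs where some component switches between $\mathfrak{m}_l$ and $R_l$ while everything else agrees. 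The main obstacle I anticipate is the simultaneous calibration of the upper-bound construction and the lower-bound covering argument so that they meet at exactly $(\sum_l n_l) + m - 1$: a naive choice of $Z_l = M_l$ for $l \le m-1$ fails to distinguish certain pairs that coincide except in an unresolved $\mathfrak{m}_l$-vs-$R_l$ position in component $m$, so the correct specification of the direction markers — and, symmetrically, the correct selection of test pairs on the lower-bound side — is the delicate bookkeeping I expect to be the main technical work.
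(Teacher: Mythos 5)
Your opening step (diameter $3$, with $d(I,J)=3$ precisely for complementary pairs inside $M$) is correct and is exactly the paper's Lemma \ref{dimprod0}, and you have identified the right target cardinality. The genuine gap is that you never actually produce a resolving set of size $(\sum_i n_i)+m-1$. Your candidate $W=\{Y_{l,k}\}\cup\{Z_l\}_{l\le m-1}$ with $Z_l=R_1\times\cdots\times \mathfrak{m}_l\times\cdots\times R_m$ fails, as you yourself concede: already for $m=2$, the pair $\mathfrak{a}\times \mathfrak{m}_2$ versus $\mathfrak{a}\times R_2$ has the same distance to every $Y_{l,k}$ (every proper nonzero ideal of $R_2$ sits below both $\mathfrak{m}_2$ and $R_2$) and the same distance to $Z_1=\mathfrak{m}_1\times R_2$ (both are contained in it), so $W$ does not resolve. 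Deferring the repair to ``delicate bookkeeping'' is not a proof. The paper's construction sidesteps the problem: take as $W$ the set of \emph{all} axis vertices $0\times\cdots\times I_i\times\cdots\times 0$ with $0\neq I_i\trianglelefteq R_i$, \emph{including} $I_i=R_i$ --- there are $\sum_i(n_i+1)=\sum_i n_i+m$ of them --- and delete exactly one. The vertices $0\times\cdots\times R_j\times\cdots\times 0$ do the job you intended for the $Z_l$'s, but locally in each coordinate: such a vertex is contained in $J$ iff $J_j=R_j$, so it is at distance $1$ from $J$ and at distance $2$ from any $I$ agreeing with $J$ off position $j$ but with $I_j\subsetneq R_j$; since these witnesses are needed in every coordinate, no unresolved ambiguity is pushed into a ``last'' component.

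The lower bound is likewise only a plan. You propose to add a count of $\sum_l n_l$ from consecutive pairs to a count of $m-1$ from flip pairs, but you give no argument that the forced distinguishers for these two families of test pairs are disjoint, and in general a single vertex of a resolving set can separate several of your test pairs at once, so the two contributions cannot simply be summed. The paper instead proves a single covering statement: any resolving set must contain all but at most one of the $\sum_i n_i+m$ axis vertices, by exhibiting, for any two omitted axis vertices, a pair of vertices of $In(R)$ with equal representation vectors. If you pursue your version you must recast it in that form (and note that even then the verification requires care: two omitted axis vertices in the same coordinate are \emph{not} automatically unresolved, since a vertex such as $A\times R_2\times 0\times\cdots\times 0$ separates $A\times 0\times\cdots\times 0$ from $B\times 0\times\cdots\times 0$ when $A\subsetneq B$, so one must argue about which vertices can lie in $W$).
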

\begin{proof}
{We show that $dim_M(In(R))=(\sum_{i=1}^{m} {n}_{i}) + m-1$, where $n_{i}=|I(R_{i})|$ for every $1\leq i\leq m$. Indeed, we have the following claims:\\
\textbf{Claim 1.} $dim_M(In(R))\geq (\sum_{i=1}^{m} {n}_{i}) + m-1$.\\
 Let 
  $\chi_{1}=\lbrace I_{1i} \times 0 \times \cdots \times 0 \mid 0\neq I_{1i}\trianglelefteq R_{1}, 1\leq i\leq n_{1}+1\rbrace   $,\\
 
$\chi_{2}=\lbrace 0\times I_{2i} \times 0 \times \cdots \times 0 \mid 0\neq I_{2i}\trianglelefteq R_{2}, 1\leq i\leq n_{2}+1\rbrace $,\\
 \vdots
 
 and $\chi_{m}=\lbrace 0 \times 0 \times \cdots \times 0 \times I_{mi} \mid 0\neq I_{mi}\trianglelefteq R_{m}, 1\leq i\leq n_{m}+1\rbrace $.\\
From the above sets, only one member may not contained in $W$, otherwise, we assume that two vertices $J_{1}$ and $J_{2}$ of the above sets are not in $W$. If two vertices $J_{1}$ and $J_{2}$ are contained $\chi_{i}$, then  clearly $D(J_{1}|W)=D(J_{2}|W)$. Thus without loss of generality, assume that $J_{1}\in \chi_{1} $ and $J_{2}\in \chi_{2} $. In this case, there are $1\leq i\leq n_{1}$ and $1\leq j\leq n_{2}$ such that $J_{1}= I_{1i} \times 0 \times \cdots \times 0 $ and $J_{2}=  0 \times I_{2j} \times 0 \times \cdots \times 0 $. Suppose that $V= R_{1} \times I_{2j}\times 0 \times \cdots \times 0$, then $D(J_{1}|W)=D(V |W)$.\\
 Also, $|\bigcup_{i=1}^{m}\chi_{i}|= n_{1}+ n_{2}+ \dots +n_{m}+m$ implies that $dim_M(In(R))\geq (\sum_{i=1}^{m} {n}_{i}) + m-1$.\\
 
\textbf{Claim 2.}
$dim_M(In(R))\leq (\sum_{i=1}^{m} {n}_{i}) + m-1$.\\
Let $W=\bigcup_{i=1}^{m}\chi_{i}\setminus \lbrace I_{1i} \times 0 \times \cdots \times 0 \rbrace$. We claim that $W$ is a resolving set for $In(R)$. For this, let $I , J \not \in W $ and $I_{1}\times \cdots \times I_{m}=I\neq J=J_{1}\times \cdots \times J_{m}$. Hence, there exists  $1\leq j\leq m$ such that $I_{j}\neq J_{j}.$ Since $R$ is PIR,  $I_{j}\subset J_{j}$ or $J_{j}\subset I_{j}$. Without loss of generality assume that, $I_{j}\subset J_{j}$. In this case, we have $d(I,V)=2\neq1= d(V,J)$, where $V=0 \times \cdots \times 0 \times J_{j}\times 0\times \dots \times 0 $. Hence for every $I\neq J$, $D(I|W)\neq D(J|W)$. Thus $dim_M(In(R))\leq (\sum_{i=1}^{m} {n}_{i}) + m-1$.\\
By Claims 1,2, $dim_M(In(R))= (\sum_{i=1}^{m} {n}_{i}) + m-1.$}
\end{proof}
\begin{thm}\label{isomorphismm}
 Let  $R\cong S\times T$ such that $S= \prod_{i=1}^{m}{R}_{i}$, $m\geq 1$, $T=\prod_{j=1}^{n}\mathbb{F}_{j}$, $n\geq 1$, where $R_{i}$ is a PIR non-field for every $1\leq i\leq m$, $m\geq 1$ is a positive integer and $\mathbb{F}_{j}$ is a field  for every $1\leq j\leq n$, $n\geq 1$ is a positive integer. Then\\
 $1)$ If $m=n=1$, then $dim_M(In(R))=n+m+n_{1}-2=n_{1}$, where $n_{1}=|I(R_{1})|$.\\
 $2)$ If $m=1$ and $n=2$, then $dim_M(In(R))=n + m + n_{1}-1=n_{1}+2$, where $n_{1}=|I(R_{1})|$.\\
 $3)$ If $n\geq 3$, then $dim_M(In(R))=(\sum_{i=1}^{m} {n}_{i})+m+n$, where $n_{i}=|I(R_{i})|$.
\end{thm}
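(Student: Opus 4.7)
The plan is to follow the two-claim template (lower bound, then upper bound) of Theorem~\ref{refe}, handling each of the three cases separately and treating Case~(3) as the main case. I would generalise the ``unit-support'' families used there: for each PIR factor $R_i$ let
\[
\chi_i=\{\,0\times\cdots\times 0\times I_{ij}\times 0\times\cdots\times 0\mid 0\neq I_{ij}\trianglelefteq R_i\,\},
\]
so $|\chi_i|=n_i+1$; and for each field factor $\mathbb{F}_j$ take the singleton $\{X_j\}$ of the minimal ideal supported at $\mathbb{F}_j$. For Case~(3) the candidate metric basis is
$W=\bigcup_{i=1}^m \chi_i\cup\{X_1,\dots,X_n\}$, which has size $(\sum n_i)+m+n$.

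For the upper bound in Case~(3), I would argue exactly as in Theorem~\ref{refe}: if $I=I_1\times\cdots\times I_{m+n}$ and $J=J_1\times\cdots\times J_{m+n}$ are distinct vertices outside $W$, pick a coordinate where $I_k\neq J_k$; since either $R_k$ is a PIR (ideals totally ordered) or $R_k$ is a field (only $0$ and $\mathbb{F}_k$), one of $I_k\subset J_k$ or $J_k\subset I_k$ holds; then the unit-support ideal in that coordinate (which belongs to $W$) distinguishes $I$ and $J$ by a distance-$1$ versus distance-$2$ argument. For the lower bound, I would suppose for contradiction that two members $J_1,J_2$ of $\bigcup_i\chi_i\cup\{X_1,\dots,X_n\}$ are missing from a metric basis $W'$, and in each possible configuration (two ideals in the same $\chi_i$, ideals in different $\chi_i$'s, or one of them an $X_j$) exhibit an explicit confounder $V$ with $D(J_1|W')=D(V|W')$, mirroring the $V=R_1\times I_{2j}\times 0\times\cdots\times 0$ construction of Theorem~\ref{refe}. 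The hypothesis $n\geq 3$ is what gives enough field positions to accommodate all configurations involving an $X_j$.

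Cases~(1) and~(2) are small-$n$ exceptions in which $In(R)$ has a very constrained structure, so the bound of Case~(3) is loose and smaller resolving sets exist. Here I would argue directly: for Case~(1), $R\cong R_1\times\mathbb{F}_1$ gives an $In(R)$ that essentially consists of two chains coming from $I(R_1)$ joined through $\mathbb{F}_1$, and an explicit resolving set of size $n_1$ can be listed and checked; for Case~(2), a similar but slightly larger enumeration gives $n_1+2$. The matching lower bounds follow from the same ``omit-two confounder'' technique restricted to the families available in this small setting, together with the fact that $\mathrm{diam}(In(R))\leq 3$ by \cite[Theorem~2.1]{Akbari}. The main obstacle will be the lower bound in Case~(3) when the omitted ideal is a field-supported $X_j$: producing a confounder that matches $X_j$ on distances to \emph{every} remaining vertex of $W'$ requires a coordinate-swap among the field factors, and this is precisely why $n\geq 3$ is required and why Cases~(1),(2) fall out.
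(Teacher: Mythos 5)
Your overall architecture (unit-support families $\chi_i$ and the minimal ideals $X_j$, lower bound plus upper bound) matches the paper, and your upper-bound argument for Case~(3) is essentially the paper's. But there is a genuine gap in your lower bound for Case~(3). The ``suppose two members $J_1,J_2$ of $S=\bigcup_i\chi_i\cup\{X_1,\dots,X_n\}$ are missing'' technique, mirrored from Theorem~\ref{refe}, only establishes that at most one member of $S$ can be absent from a metric basis, hence $dim_M(In(R))\geq |S|-1=(\sum_{i=1}^m n_i)+m+n-1$, which is one short of the claimed value. The paper proves the strictly stronger statement that \emph{every} member of $S$ lies in \emph{every} metric basis: assuming a single $J\in S$ is missing, it exhibits two vertices outside $W$ whose representation vectors coincide (e.g.\ if $X_j\notin W$, two vertices differing only in the $j$-th field coordinate become indistinguishable). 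This distinction is not cosmetic: in Theorem~\ref{refe} one member of $\bigcup_i\chi_i$ genuinely \emph{can} be omitted (and the dimension there is $|\bigcup_i\chi_i|-1$), whereas the presence of $n\geq 3$ field factors forces all of $S$ into the basis. Your closing remark about coordinate swaps among field factors gestures at the right phenomenon, but framed inside the omit-two setup it cannot close the off-by-one deficit; you need to run the contradiction from a single omission.

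For Cases~(1) and~(2) your plan is directionally consistent with the paper --- Case~(2) is indeed an omit-at-most-one argument applied to a set of size $n_1+3$, and Case~(1) is a direct forcing argument showing that the $n_1$ vertices $I_2\times 0,\dots,I_{n_1}\times 0,R_1\times 0$ must all lie in $W$ because the chain $I_i\times\mathbb{F}\subset I_{i+1}\times\mathbb{F}$ is otherwise unresolved --- but as written these parts are only sketches; the explicit confounders and the verification that the proposed sets resolve still need to be supplied.
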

\begin{proof}
{
$1)$
We first show that $dim_M(In(R))\geq n_{1}$. For this, let $W$ be a metric basis for $In(R)$.\\
Let $I_{i}\subset I_{i+1}$, where  $I_{i}\in I(R_{1})$, for every $1\leq i\leq n_{1}-1$. Then, $d(I_{i} \times \mathbb{F}, V)=d(I_{i+1} \times \mathbb{F}, V)$, for every $ 1\leq i\leq n_{1}-1$, for every $V\in V(In(R))\setminus \lbrace I_{2} \times 0, \ldots , I_{n_{1}}\times 0\rbrace$. Thus $\lbrace W_{1}= I_{2} \times 0, W_{2}= I_{3} \times 0 ,\ldots , W_{n_{1}-1}=I_{n_{1}} \times 0\rbrace\subseteq W$. Also, for every $1\leq j\leq n_{1}-1$, we have $d(I_{n_{1}} \times \mathbb{F}, W_{j})=d(R_{1} \times 0, W_{j})$. Hence $W_{n_{1}}= R_{1}\times 0 \in W$. Therefore, $dim_M(In(R))\geq n_{1}$.\\
 Conversely, we show that $dim_M(In(R))\leq n_{1}$.\\ Let $W=\lbrace W_{1}= I_{2} \times 0, W_{2}= I_{3} \times 0 ,\ldots , W_{n_{1}-1}=I_{n_{1}} \times 0,W_{n_{1}}= R_{1}\times 0 \rbrace $.\\
 It is enough to show that $W$ is a resolving set of $In(R)$. Let $I^{\prime}=I_{1}^{\prime}\times I_{2}^{\prime}$ and $J=J_{1}\times J_{2}$ be two distinct vertices of $V(In(R))\setminus W$. If $I^{\prime}= I_{1}\times 0$ or $J= I_{1}\times 0$, then obviously $D(I^{\prime}|W)\neq D(J|W)$. Thus we may assume that $I_{2}^{\prime}=J_{2}=\mathbb{F}$.\\
 Since $I^{\prime}\neq J$, without loss of generality, we may assume that $I^{\prime}\subset J$. In this case there exists $1\leq i\leq n_{1}$ such that  $d(I^{\prime},W_{i})=2\neq 1= d(J, W_{i})$. Thus in both cases we have $D(I^{\prime}|W)\neq D(J|W)$. Therefore, $dim_M(In(R))\geq n_{1}$.\\
$2)$ Let $S= \lbrace V_{1}=I_{11} \times 0 \times 0, \ldots , V_{n_{1}}=I_{1n_{1}} \times 0 \times 0 , V_{n_{1}+1}=R_{1} \times 0 \times 0, V_{n_{1}+2}=0\times \mathbb{F}_{1}\times 0, V_{n_{1}+3}=0\times 0\times \mathbb{F}_{2} \rbrace $. From  the set $S$, only one member may not contained in the metric basis $W$. For if not, we assume that two vertices $J_{1}$ and $J_{2}$ of the above set are not contained in $W$. If $J_{1} , J_{2} \in \lbrace V_{1}, \ldots , V_{n_{1}} \rbrace$ or  $J_{1}\in \lbrace V_{1}, \ldots , V_{n_{1}} \rbrace$ and $J_{2}=V_{n_{1}+1}$ or $J_{1}=V_{n_{1}+2}$ and $J_{2}=V_{n_{1}+3}$, then clearly $D(J_{1}|W)=D(J_{2}|W)$. Now without loss of generality, assume that  $J_{1}= V_{1}$ and $J_{2}= V_{n_{1}+2} $, then $D(I_{11}\times \mathbb{F}_{1}\times \mathbb{F}_{2} |W)=D(I_{11}\times 0 \times \mathbb{F}_{2}|W)$. Finally, assume that $J_{1}= V_{n_{1}+1}$ and $J_{2}= V_{n_{1}+2}$, in this case, $D(I_{11}\times \mathbb{F}_{1}\times \mathbb{F}_{2} |W)=D(I_{11}\times 0 \times \mathbb{F}_{2}|W)$. Since $|S|=n_{1}+3$, $dim_M(In(R))\geq n_{1}+2$. \\
 Conversely, let $W=\lbrace V_{2}, \ldots , V_{n_{1}+3} \rbrace$. We claim that  $W$  is a resolving set and consequently a metric basis for  $In(R)$. It is enough to show that for every two distinct vertices $V^{\prime}=V_{1}^{\prime}\times V_{2}^{\prime} \times V_{3}^{\prime}$ and $U=U_{1}\times U_{2}\times U_{3}$  of $V(In(R))\setminus W$, $D(V^{\prime}|W)\neq D(U|W)$.\\
Since $V^{\prime} \neq U$, we have the following cases:\\
\textbf{Case 1.} 
$V_{j}^{\prime}=0$ and $U_{j}= \mathbb{F}_{j}$ or $V_{j}^{\prime}=\mathbb{F}_{j}$ , $U_{j}= 0$ for some $2\leq j\leq 3$ .\\
 Without loss of generality, we may assume that $V_{j}^{\prime}=0$ and $U_{j}= \mathbb{F}_{j}$. This clearly implies that $d(V^{\prime}, X_{j})\in \lbrace 2, 3\rbrace \neq 1=d(U, X_{j}).$ Thus $D(V^{\prime}|W)\neq D(U|W)$.\\
 \textbf{Case 2.} 
$V_{1}^{\prime}\neq U_{1} $.\\
Since $R$ is a PIR, $V_{1}^{\prime}\subset U_{1}$ or $U_{1}\subset V_{1}^{\prime}$. Without loss of generality, we may assume that $V_{1}^{\prime}\subset U_{1}$. If $U_{1}=I_{11}$, then $d(U , I_{12}\times 0 \times 0)=2 \neq 3=d(V^{\prime},  I_{12}\times 0 \times 0)$, otherwise, $d(U , U_{1}\times 0 \times 0)=1 \neq 2=d(V^{\prime},  U_{1}\times 0 \times 0)$.\\
Thus in both cases we have that $D(V^{\prime}|W)\neq D(U|W)$. Therefore, $dim_M(In(R))\leq n_{1}+2$. 
$3)$ We first show that $dim_M(In(R))\geq (\sum_{i=1}^{m} {n}_{i})+m+n$.\\
For this let\\$\chi_{1}=\lbrace I_{1i} \times 0 \times \dots \times 0 \mid 0\neq I_{i1}\trianglelefteq R_{1}, 1\leq i\leq n_{1}+1\rbrace$,\\$\chi_{2}=\lbrace 0\times I_{2i} \times 0 \times \dots \times 0 \mid 0\neq I_{i1}\trianglelefteq R_{2}, 1\leq i\leq n_{2}+1\rbrace$,\\
 \vdots
 
and $\chi_{m}=\lbrace 0 \times 0 \times \dots \times 0 \times I_{mi} \mid 0\neq I_{i1}\trianglelefteq R_{m}, 1\leq i\leq n_{m}+1\rbrace$.\\
 Let $S= \cup_{X\in \chi_{i}, 1\leq i\leq m }^{}X \cup_{j=m+1}^{m+n} X_{j}$, $X_{j}=0 \times \dots \times 0 \times \mathbb{F}_{j-m}\times 0 \times \dots \times 0$ where $m+1\leq j\leq m+n$. We claim that for every metric basis $W$, we have $S\subseteq W$. Otherwise, there exists $J \in S$ such that $J\notin W$. Since $J \in S$, we have the following cases:\\ 
\textbf{Case 1.} $J= 0\times \cdots \times 0 \times I_{ij}\times 0\times \cdots \times 0 $, where $1\leq i \leq m$ and $1\leq j \leq n_{i}$.\\
In this case, if $j=1$, then $D(R_{1}\times R_{2} \times \cdots \times R_{i-1}\times J_{i1}\times R_{i+1}\times \cdots \times \mathbb{F}_{m+n-1} \times 0|W)=D(R_{1}\times R_{2} \times \cdots \times R_{i-1}\times 0\times R_{i+1}\times \cdots \times \mathbb{F}_{m+n-1} \times 0|W)$. If $j=n_{i}$, then $D(0\times \cdots  \times 0 \times I_{in_{i}}\times 0 \times \cdots \times 0 \times \mathbb{F}_{m+1}\times 0 \times \cdots \times 0|W)=D(0\times \cdots  \times 0 \times I_{in_{i}-1}\times 0 \times \cdots \times 0 \times \mathbb{F}_{m+1}\times 0 \times \cdots \times 0|W)$. Otherwise, $D(0\times \cdots  \times 0 \times I_{in_{i}}\times 0 \times \cdots \times 0 \times \mathbb{F}_{m+1}\times 0 \times \cdots \times 0|W)=D(0\times \cdots  \times 0 \times R_{i}\times 0 \times \cdots \times 0 \times \mathbb{F}_{m+1}\times 0 \times \cdots \times 0|W)$.\\
\textbf{Case 2.} $J= 0\times \cdots \times 0 \times R_{i}\times 0 \times \cdots \times 0 $, where $1\leq i \leq m$.\\
In this case, $D(0\times \cdots  \times 0 \times I_{in_{i}}\times 0 \times \cdots \times 0 \times \mathbb{F}_{m+1}\times 0 \times \cdots \times 0|W)=D(0\times \cdots  \times 0 \times R_{i}\times 0 \times \cdots \times 0 \times \mathbb{F}_{m+1}\times 0 \times \cdots \times 0|W)$.\\
\textbf{Case 3.} $J=X_{j}$ (See Remark \ref{dimfin}), where $m+1\leq j \leq m+n$\\
In this case, $D(I_{11}\times R_{2}\times \cdots \times R_{m}\times \mathbb{F}_{1} \times \cdots \times \mathbb{F}_{j-1} \times 0 \times \mathbb{F}_{j+1}\times \cdots \times \mathbb{F}_{m+n-1}\times 0|W)=D(I_{11}\times R_{2}\times \cdots \times \mathbb{F}_{m+n-1}\times 0|W)$.\\
Thus $S\subseteq W$. This implies that $dim_M(In(R))=|W|\geq |S|=(\sum_{i=1}^{m} {n}_{i})+m+n$.\\
Conversely, we claim that  $W= \cup_{X\in \chi_{i}, 1\leq i\leq m }^{}X \cup_{j=m+1}^{m+n} X_{j}$  is a resolving set and consequently a metric basis for $In(R)$. It is enough to show that for every two distinct vertices $V^{\prime}=V_{1}^{\prime}\times \cdots \times V_{m+n}^{\prime}$ and $U=U_{1}\times \cdots \times U_{m+n}$  of $V(In(R))\setminus W$,  the inequality $D(V^{\prime}|W)\neq D(U|W)$ holds.\\
Since $V^{\prime} \neq U$, we have the following cases:\\
\textbf{Case 1.} 
$V_{j}^{\prime}=0$ and $U_{j}= \mathbb{F}_{m-j}$ or $V_{j}^{\prime}=\mathbb{F}_{m-j}$ , $U_{j}= 0$ for some $m+1\leq j\leq m+n$ .\\
 Without loss of generality, one may assume that $V_{j}^{\prime}=0$ and $U_{j}= \mathbb{F}_{m-j}$. This clearly implies that $d(V^{\prime}, X_{j})\in \lbrace 2, 3\rbrace \neq 1=d(U, X_{j}).$ Thus $D(V^{\prime}|W)\neq D(U|W)$.\\
 \textbf{Case 2.} 
$V_{i}^{\prime}\neq U_{i}$ for some $1\leq i\leq m $.\\
Since $R$ is a PIR, $V_{i}^{\prime}\subset U_{i}$ or $U_{i}\subset V_{i}^{\prime}$. Without loss of generality, one may assume that $V_{i}^{\prime}\subset U_{i}$. Then\\ $d(U , 0\times \dots \times 0 \times U_{i}\times 0\times \dots \times 0)=1 \neq 2=d(V^{\prime},  0\times \dots \times 0 \times U_{i}\times 0\times \dots \times 0).$\\
Thus in both cases we have that $D(V^{\prime}|W)\neq D(U|W)$. Therefore, $dim_M(In(R))\leq (\sum_{i=1}^{m} {n}_{i}) +m+n$.
}
\end{proof}

Next, we study $sdim(In(R))$. First, the case no fields appear in decomposition of $R$ is investigated.
\begin{lem}\label{dimprod0}
Let $m\geq 2$ be a positive integer and $R \cong \prod_{i=1}^{m}R_{i}$, where $R_{i}$ is a PIR for every $1\leq i \leq m$. For every  $I, J \in V(In(R))$, $d(I,J)=3$ if and only if $I, J \in M$ and $I=J^{c}$.
\end{lem}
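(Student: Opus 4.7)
The plan is to reduce $d(I,J)=3$ to the two algebraic conditions $I+J=R$ and $I\cap J=0$, and then exploit the total ordering of ideals in each local Artinian PIR factor.

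First I would recall from \cite[Theorem 2.1]{Akbari} that $\mathrm{diam}(In(R))\leq 3$, so the claim is really about characterizing pairs at distance exactly $3$. I would begin by analyzing when two distinct vertices $I,J\in V(In(R))$ satisfy $d(I,J)\leq 2$. If $I,J$ are comparable then $d(I,J)=1$, so assume they are incomparable. A common neighbor $K$ of $I$ and $J$ must fall into one of three cases: $K$ properly contains both, $K$ is properly contained in both, or $K$ is between them in the containment order. The last case would force $I\subset J$ or $J\subset I$, contradicting incomparability. Thus a common neighbor exists iff either $I+J$ is a nontrivial ideal (so one may take $K=I+J$, noting that $K\supsetneq I$ and $K\supsetneq J$ by incomparability) or $I\cap J$ is a nontrivial ideal (so one may take $K=I\cap J$). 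Since $I,J\neq 0,R$, this amounts to $I+J\neq R$ or $I\cap J\neq 0$. Consequently $d(I,J)=3$ if and only if $I,J$ are incomparable, $I+J=R$, and $I\cap J=0$.

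Next I would translate these conditions through the product decomposition. Writing $I=I_1\times\cdots\times I_m$ and $J=J_1\times\cdots\times J_m$, the identities $I+J=(I_1+J_1)\times\cdots\times(I_m+J_m)$ and $I\cap J=(I_1\cap J_1)\times\cdots\times(I_m\cap J_m)$ give $I_i+J_i=R_i$ and $I_i\cap J_i=0$ for every $i$. Because each $(R_i,m_i)$ is a local Artinian PIR, its ideals form a chain $R_i\supsetneq m_i\supsetneq m_i^2\supsetneq\cdots\supsetneq 0$; in particular any two are comparable, so $I_i+J_i$ is the larger of $I_i,J_i$ and $I_i\cap J_i$ is the smaller. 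The equalities $\max(I_i,J_i)=R_i$ and $\min(I_i,J_i)=0$ therefore force $\{I_i,J_i\}=\{0,R_i\}$ in each coordinate, which by Remark \ref{dimfin} is precisely the statement that $I,J\in M$ and $J=I^{c}$.

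For the converse, if $I,J\in M$ with $J=I^c$ then coordinate-wise one has $\{I_i,J_i\}=\{0,R_i\}$, so trivially $I+J=R$, $I\cap J=0$, and $I,J$ are incomparable (using $m\geq 2$). By the first paragraph $d(I,J)\geq 3$, and combined with $\mathrm{diam}(In(R))\leq 3$ we obtain equality.

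The only subtle step is the case analysis on common neighbors: one must be careful that $I+J$ and $I\cap J$, when nontrivial, actually serve as bona fide vertices strictly comparable to both $I$ and $J$, which is where incomparability of $I$ and $J$ is essential. Everything else is a clean translation via the product structure and the chain property of ideals in a local Artinian PIR.
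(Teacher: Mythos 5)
Your proof is correct, and while it lands in the same place as the paper's, it is organized around a different key observation. The paper argues directly in coordinates: for the forward direction it shows that a common neighbour of $I$ and $J=I^{c}$ would have to properly contain both of them and hence be all of $R$; for the converse it exhibits explicit common neighbours by hand (a vertex with a single nonzero component equal to the smaller of $I_i,J_i$ when the supports overlap, and $R_1\times\cdots\times I_i\times\cdots\times R_m$ when some component of $I$ is a proper nonzero ideal of $R_i$). You instead isolate the general lattice-theoretic fact that for incomparable vertices a common neighbour exists if and only if $I+J\neq R$ or $I\cap J\neq 0$ (the canonical witnesses being $I+J$ and $I\cap J$ themselves), so that $d(I,J)=3$ is equivalent to $I+J=R$ and $I\cap J=0$; the chain property of each local Artinian PIR then converts these two identities coordinatewise into $\{I_i,J_i\}=\{0,R_i\}$, i.e.\ $I,J\in M$ and $J=I^{c}$. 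Your intermediate characterization is valid in the inclusion ideal graph of any ring and is arguably cleaner, since it replaces the paper's ad hoc witness constructions by the two canonical ones; the paper's version stays entirely inside the product decomposition. Both arguments rely in the same implicit way on $\mathrm{diam}(In(R))\leq 3$ from the cited theorem of Akbari et al.\ (and hence on connectivity, which silently excludes the degenerate case $m=2$ with both factors fields, where the statement as literally written would fail because $d(I,I^{c})=\infty$); neither your proof nor the paper's addresses that edge case, so it is not a gap specific to your argument.
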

\begin{proof}
{
First let $I, J \in M$ and $I=J^{c}$. Since $I \nsim J$, $d(I,J)\geq 2$. Now let $d(I,J)=2$. Thus there exists a vertex $V$ such that $I\sim V \sim J$ is the shortest path between $I$ and $J$. Since  $I\sim V$, $I\subset V$ or $V\subset I$. If $V\subset I$, then clearly $V \nsim J$. Thus $I\subset V$. A similar argument shows that $J\subset V$, which is impossible. Thus $d(I,J)=3$.\\
Conversely, since $d(I,J)=3$, if $I_{i}\neq 0$, then $J_{i}= 0$ and if  $J_{i}\neq 0$, then $I_{i}=0$ for every $1\leq i \leq m$. Otherwise, $I\sim V \sim J$, where $V=0 \times \cdots \times 0 \times V_{i}\times 0\times \cdots \times 0$, $V_{i}= I_{i}$ if $I_{i}\subset J_{i}$, else $V_{i}=J_{i}$. This implies that $d(I,J)\leq 2$, a contradiction.   
 Now suppose to the contrary, $I, J \notin M$. Then there exists $1\leq i \leq m$ such that $I_{i}\in I(R_{i})$. Thus $I \sim V \sim J$, where $V= R_{1}\times \cdots \times R_{i-1}\times I_{i}\times R_{i+1}\times \cdots\times R_{m}$, which implies that $d(I,J)\leq 2$, a contradiction. Thus $I, J \in M$ and $I=J^{c}$. 
}
\end{proof}
\begin{lem}\label{dimprod00}
Let $n\geq 2$ be a positive integer and $R \cong \prod_{i=1}^{m}R_{i}$, where $R_{i}$ is a PIR non-field for every $1\leq i \leq m$. Then, $IJ^{c}\in In(R)$ if and only if $JI^{c} \in In(R)$, for every  $I, J \in M $.
\end{lem}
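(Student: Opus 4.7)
My plan is to reduce everything to a combinatorial condition on the ``support sets'' of $I$ and $J$ and observe that the resulting condition is manifestly symmetric.

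First I would set up notation: for any $I = I_{1}\times\cdots\times I_{m}\in M$, associate the support $\sigma(I) = \{\,i\in\{1,\dots,m\}\mid I_{i}=R_{i}\,\}$. Since the only non-trivial ideals in $M$ correspond to proper nonempty subsets, $\emptyset\neq\sigma(I)\neq\{1,\dots,m\}$, and by the definition in Remark \ref{dimfin} we have $\sigma(I^{c})=\{1,\dots,m\}\setminus\sigma(I)$.

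Next I would translate each inclusion into a condition on supports. Because every component of a vertex in $M$ is either $0$ or $R_{i}$, coordinate-wise containment is equivalent to the obvious containment of supports. Concretely, for $I,J\in M$:
\begin{itemize}
\item $I\subseteq J^{c}\iff \sigma(I)\cap\sigma(J)=\emptyset$;
\item $J^{c}\subseteq I\iff \sigma(I)\cup\sigma(J)=\{1,\dots,m\}$;
\item $I=J^{c}\iff \sigma(I)$ and $\sigma(J)$ are complementary.
\end{itemize}
Each of these conditions is visibly symmetric in $I$ and $J$. Therefore $IJ^{c}\in E(In(R))$, i.e. one of $I\subsetneq J^{c}$ or $J^{c}\subsetneq I$ holds, is equivalent to saying that $\sigma(I)$ and $\sigma(J)$ are either disjoint or jointly covering, and in either case not complementary. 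Applying the same translation with the roles of $I$ and $J$ interchanged shows that this is precisely the criterion for $JI^{c}\in E(In(R))$, completing the equivalence.

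I expect no serious obstacle; the only thing to watch out for is the strictness of the containment (ruling out the $I=J^{c}$ case in both directions simultaneously) and making sure the non-triviality of $I$ and $J$ is used only to keep $\sigma(I),\sigma(J)$ away from $\emptyset$ and $\{1,\dots,m\}$, which does not break the symmetry. A short sanity check on a small example, e.g.\ $m=3$ with $\sigma(I)=\{1\}$ and $\sigma(J)=\{2\}$, confirms that both $IJ^{c}$ and $JI^{c}$ are edges, as predicted.
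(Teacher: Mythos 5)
Your proof is correct. The paper gives no argument here (its proof reads only ``It is straightforward''), and your support-set translation --- with both $I\subseteq J^{c}$ and $J^{c}\subseteq I$ reduced to the visibly symmetric conditions $\sigma(I)\cap\sigma(J)=\emptyset$ and $\sigma(I)\cup\sigma(J)=\{1,\dots,m\}$, minus the complementary case --- is exactly the routine verification the authors are alluding to.
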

\begin{proof}
{
It is straight forward.
}

\end{proof}
\begin{lem}\label{dimprod4}
Suppose that  $R\cong \prod_{i=1}^{m}{R}_{i}$, where $R_{i}$ is a PIR  non-field for every $1\leq i\leq m$, $m\geq 2$ is a positive integer. Then the following statements hold:\\
$1)$ $V(In(R))=V(In(R)_{SR})$.\\
$2)$ Suppose that $I, J \in M \subset V(In(R)_{SR})$, then  $IJ \in E(In(R)_{SR})$ if and only if $I= J^{c}$ or $IJ, IJ^{c}\notin In(R)$.\\
$3)$ Suppose that $I, J \in V(In(R)_{SR})\setminus M$, then  $IJ \in E(In(R)_{SR})$ if and only if $IJ \notin E(In(R)).$\\
$4)$ For every $I\in V(In(R)_{SR})\setminus M$ and $J\in M$, $IJ \in E(In(R)_{SR})$ if and only if $IJ, IJ^{c} \notin E(In(R))$.
\end{lem}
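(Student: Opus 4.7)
My plan is to prove (2), (3), (4) directly using Lemma \ref{dimprod0} as the main distance tool, and deduce (1) as a corollary. Lemma \ref{dimprod0} ensures that the only pairs at distance $3$ are $M$-complement pairs; for all other pairs the distance is at most $2$.

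For the $(\Leftarrow)$ direction of each of (2)-(4), the hypothesis combined with Lemma \ref{dimprod0} forces $d(I,J)=2$ (since $IJ\notin E(In(R))$ rules out $d=1$, and either $I\ne J^c$ or one of $I,J$ lies outside $M$ rules out $d=3$); the case $I=J^c$ in (2) gives $d(I,J)=3=\mathrm{diam}(In(R))$, so mutual maximal distance is automatic. To verify mutual maximal distance in the remaining cases, note that for any $w\in N(I)$ the equality $d(w,J)=3$ would force $w,J\in M$ with $w=J^c$, but $J^c\in N(I)$ is excluded by the hypothesis ($IJ^c\notin E(In(R))$ in (2) and (4), or $J\notin M$ in (3) so no such $w^c$ equals $J$). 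The argument for $w\in N(J)$ is symmetric.

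For the $(\Rightarrow)$ direction, assume $I,J$ are mutually maximally distant. Ruling out $IJ^c\in E(In(R))$ is easy: $I\sim J^c$ would put $J^c\in N(I)$ with $d(J,J^c)=3>d(I,J)$, contradicting mutual maximal distance. The core obstacle is ruling out $IJ\in E(In(R))$; assuming WLOG $I\subsetneq J$, I construct a witness $w$ that is properly comparable with one of $I,J$ and incomparable with the other, proving they are not mutually maximally distant. With $P_I=\{p: 0\ne I_p\ne R_p\}$ and $D=\{i: I_i\subsetneq J_i\}$, the generic construction picks some $q$ with $J_q<R_q$ and $\ell\in D\setminus\{q\}$, and sets $w:=I$ with the $q$-th coordinate replaced by $R_q$; this yields $w\in N(I)$ with $w_q=R_q>J_q$ and $w_\ell=I_\ell<J_\ell$, so $w$ is incomparable with $J$. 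The tight subcase---$I$ and $J$ differing at a single index $p^*$ with $I_i=J_i\in\{0,R_i\}$ elsewhere and both $I_{p^*},J_{p^*}$ proper non-zero in $R_{p^*}$---requires an auxiliary index $i_0\ne p^*$ (available since $m\ge 2$): if $I_{i_0}=R_{i_0}$ take $w_{i_0}=0$, $w_{p^*}=J_{p^*}$ and $w_i=R_i$ otherwise (witness in $N(J)\setminus N[I]$); if $I_{i_0}=0$ take $w_{i_0}$ to be a proper non-zero ideal of $R_{i_0}$ (which exists since $R_{i_0}$ is a non-field PIR), $w_{p^*}=I_{p^*}$ and $w_i=I_i$ otherwise (witness in $N(I)\setminus N[J]$). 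The simpler $M$-to-$M$ case of (2) and the mixed case of (4) are handled by analogous but shorter witness constructions.

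Part (1) then follows: for $I\in M$, part (2) applied with $J=I^c$ gives $I\in V(In(R)_{SR})$; for $I\notin M$, construct a non-$M$ vertex $J$ incomparable with $I$ by placing a proper non-zero ideal of some $R_{i_0}$ at a coordinate chosen to break comparability with $I$, and apply part (3). The main obstacle throughout is the witness construction in the $(\Rightarrow)$ direction, particularly the tight subcase where the auxiliary index $i_0$ must be brought into play and the PIR structure together with $m\ge 2$ must be exploited carefully.
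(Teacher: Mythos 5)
Your overall strategy coincides with the paper's: both rest on the distance characterization of Lemma \ref{dimprod0} (distance $3$ occurs exactly for complementary pairs in $M$), prove the ``if'' directions by checking that no neighbour of either vertex can realise distance $3$ to the other, and prove the ``only if'' directions by ruling out $IJ^{c}\in E(In(R))$ via the $d(J,J^{c})=3$ obstruction and $IJ\in E(In(R))$ via a witness vertex. Where you differ is in level of detail: the paper dismisses the adjacency case with ``clearly'', whereas you actually build the witness $w$ (the generic $(q,\ell)$ construction and the tight one-coordinate subcase, both of which check out). Your remark that the symmetry in the backward direction of $(2)$ rests on Lemma \ref{dimprod00} makes explicit something the paper leaves implicit.

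There is, however, a genuine gap in your part $(1)$. For $I\notin M$ you propose to ``construct a non-$M$ vertex $J$ incomparable with $I$'' and invoke part $(3)$; such a $J$ need not exist. Take $R=\mathbb{Z}_4\times\mathbb{Z}_4$ and $I=2\mathbb{Z}_4\times 2\mathbb{Z}_4$: the vertices outside $M=\{\mathbb{Z}_4\times 0,\,0\times\mathbb{Z}_4\}$ other than $I$ are $2\mathbb{Z}_4\times 0$, $0\times 2\mathbb{Z}_4$, $2\mathbb{Z}_4\times\mathbb{Z}_4$ and $\mathbb{Z}_4\times 2\mathbb{Z}_4$, and every one of them is comparable with $I$, so part $(3)$ produces no partner. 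The conclusion is still true --- $J=0\times\mathbb{Z}_4\in M$ satisfies $IJ,\,IJ^{c}\notin E(In(R))$, so part $(4)$ makes $I$ and $J$ mutually maximally distant --- but your argument must be amended to fall back on an $M$-partner when no incomparable non-$M$ vertex exists, and one must then verify that a suitable $J\in M$ is always available in that situation. It is worth noting that the paper's own proof of $(1)$ asserts exactly the same existence of a partner $J\in V(In(R))\setminus M$, so this is a flaw you share with the published argument rather than one you introduce; in either version part $(1)$ needs the repair just described.
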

\begin{proof}{
$1)$ For every $I=I_{1}\times\cdots \times I_{n}\in M$, by the proof of Lemma \ref{lemma2d}, $d(I,I^{c})=3=diam(In(R)).$ Thus $I, I^{c}$ are mutually maximally distant and so $I\in V(In(R)_{SR})$. Also for every $I \in V(In(R))\setminus M$, there exists $J \in V(In(R))\setminus M$ such that $I, J$ are mutually maximally distant and $I\in V(In(R)_{SR})$ i.e., $V(In(R))=V(In(R)_{SR}).$\\
$2)$ If $I= J^{c}$ or $IJ, IJ^{c}\notin In(R)$, then clearly $I, J$ are mutually maximally distant and $IJ\in E(In(R)_{SR})$.\\
Now suppose that $IJ\in E(In(R)_{SR})$ and $I\neq J^{c}$. Since $I\neq J^{c}$, $d(I,J)\leq 2$. Also, $I, J$ are mutually maximally distant, thus $IJ, IJ^{c}\notin In(R)$.\\
$3)$ If $IJ \in E(In(R)_{SR})$, then  $I, J$ are mutually maximally distant, thus clearly $IJ\notin E(In(R))$.\\
Now suppose that $IJ \notin E(In(R))$. Since $d(I,J)_{In(R)}\leq 2$, for every $I\in V(In(R)_{SR})\setminus M$ and for every $J\in V(In(R)_{SR})$, we deduce that $I, J$ are mutually maximally distant and  $IJ\in E(In(R)_{SR})$. \\
$4)$ One side is clear. To prove the other side, assume that $IJ, IJ^{c} \notin E(In(R))$. Since $d(I,J)\leq 2$, $I, J$ are mutually maximally distant and  $IJ\in E(In(R)_{SR})$.    
}
\end{proof}
\begin{remark}
\label{dimfin1}
Let $R \cong R_{1}\times R_{2}$. If  $|I(R_{1})|=|I(R_{2})|=1$, then $In(R)_{SR}= K_{3}+ K_{2}+ K_{2}$. Thus the condition $|I(R_{i})|\geq 2$ is  necessary for $In(R)_{SR}$ to be connected. Therefore, we exclude this case from Lemma $\ref{dimprod5}$. 
\end{remark}

\begin{lem}\label{dimprod5}
Suppose that  $R\cong \prod_{i=1}^{m}{R}_{i}$, where $R_{i}$ is a PIR non-field for every $1\leq i\leq m$ and $m\geq 2$ is a positive integer. Then $In(R)_{SR}$ is a connected graph.
\end{lem}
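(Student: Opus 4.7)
The strategy is to use Remark \ref{dimfin} to partition $V(In(R))$ as $M \cup (V(In(R)) \setminus M)$, recall from Lemma \ref{dimprod4}(1) that $V(In(R)_{SR}) = V(In(R))$, and then establish connectivity of $In(R)_{SR}$ in two stages: first show that the induced subgraph on $V(In(R)) \setminus M$ is connected, then show every vertex in $M$ is adjacent in $In(R)_{SR}$ to some vertex outside $M$.

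For the first stage, fix $I, J \in V(In(R)) \setminus M$. By Lemma \ref{dimprod4}(3), $IJ \in E(In(R)_{SR})$ if and only if $I$ and $J$ are not comparable in the inclusion order of ideals of $R$, so if $I$ and $J$ are already incomparable they are directly adjacent. If instead $I \subsetneq J$ (the symmetric case is analogous), I would construct a bridge vertex $K \in V(In(R)) \setminus M$ that is incomparable to both $I$ and $J$. The construction exploits that every vertex outside $M$ has at least one coordinate that is a proper non-trivial ideal of some $R_k$, so one can place a proper non-trivial ideal of an appropriate $R_\ell$ in one coordinate of $K$ and balance the remaining coordinates between $0$ and the full factors $R_i$ to simultaneously force $K \not\subseteq I$, $I \not\subseteq K$, $K \not\subseteq J$, $J \not\subseteq K$, while ensuring $K$ itself is not in $M$. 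Lemma \ref{dimprod4}(3) then gives the path $I \sim K \sim J$ in $In(R)_{SR}$.

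For the second stage, fix $I \in M$. By Lemma \ref{dimprod4}(4) it suffices to produce some $J \in V(In(R)) \setminus M$ that is incomparable in $In(R)$ to both $I$ and $I^c$. Since $I \in M$ is a proper non-zero ideal of $R$, it has at least one coordinate equal to $R_k$ and at least one equal to $0$; moreover each $R_j$ carries its maximal ideal $m_j$ as a proper non-trivial ideal because $R_j$ is a non-field PIR. I would build $J$ by assigning some coordinate the value $m_j$, setting one coordinate in each of the "$R$-type" and "$0$-type" parts of $I$ to the opposite type, and thereby arrange that $J$ has both a coordinate strictly below and a coordinate strictly above the corresponding coordinates of $I$, and similarly for $I^c$. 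This yields the required incomparabilities, and combining the two stages produces a path in $In(R)_{SR}$ between any two vertices, establishing connectivity.

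The main obstacle is the simultaneous realisation of all the constraints on the bridge vertex $K$ in Stage~1 (respectively on $J$ in Stage~2): one has to avoid landing in $M$ while destroying both directions of comparability with two prescribed ideals. When $m$ is small or some $R_i$ has very few ideals the degrees of freedom for choosing coordinates shrink, and the argument must be checked carefully in these corner cases. In particular, in the borderline configuration $m = 2$ with $|I(R_1)| = |I(R_2)| = 1$ isolated in Remark \ref{dimfin1} there is simply not enough room to carry out the construction, which is precisely why that case is excluded from the statement.
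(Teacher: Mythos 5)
Your overall architecture (reduce everything to the edge criteria of Lemma \ref{dimprod4}, connect the part of the graph outside $M$, then attach the vertices of $M$ to it) is reasonable and close in spirit to the paper's three-case analysis, and your Stage~2 is correct: if $I\in M$ has $I_p=R_p$ and $I_q=0$, the vertex $J$ with $J_p=m_p$, $J_q=m_q$ and all other coordinates zero lies outside $M$ and is incomparable to both $I$ and $I^{c}$, so Lemma \ref{dimprod4}(4) makes it a neighbour of $I$ in $In(R)_{SR}$. The genuine gap is in Stage~1: when $I\subsetneq J$ with $I,J\notin M$, a single bridge vertex $K$ incomparable to both need not exist, and this failure is \emph{not} confined to the degenerate configuration of Remark \ref{dimfin1}. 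Take any non-trivial proper ideal $A$ of $R_m$ and set $I=0\times\cdots\times0\times A$ and $J=R_1\times\cdots\times R_{m-1}\times A$; both lie outside $M$ and $I\subsetneq J$. Since the ideals of $R_m$ form a chain, $I\not\subseteq K$ forces $K_m\subsetneq A$ (all other coordinates of $I$ are zero), while $K\not\subseteq J$ forces $K_m\supsetneq A$ (all other coordinates of $J$ equal $R_i$). No vertex satisfies both, so no bridge of the kind you describe exists, no matter how the remaining coordinates are ``balanced''; one also checks via Lemma \ref{dimprod4}(4) that no vertex of $M$ bridges the pair either (in fact this $I$ has no neighbours in $M$ at all in $In(R)_{SR}$). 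This occurs for every $m\geq 2$ and arbitrarily large $n_i$, so it is not a corner case.

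The conclusion is still true in these configurations, but only via longer paths: for $R_1=R_2=\mathbb{Z}_8$ one has, for instance, the length-three path $0\times 4\mathbb{Z}_8\sim 2\mathbb{Z}_8\times 0\sim 4\mathbb{Z}_8\times 2\mathbb{Z}_8\sim \mathbb{Z}_8\times 4\mathbb{Z}_8$ in $In(R)_{SR}$. So your Stage~1 must be repaired to allow two (or more) intermediate vertices between comparable non-$M$ vertices; this is precisely the point at which the paper's own proof gives up on a one-step bridge and instead iterates a coordinate-replacement process. As written, the claim that one can ``simultaneously force'' all four non-inclusions with a single $K$ is false, and the proof does not go through without an argument for these longer detours.
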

\begin{proof}{
  By Lemma \ref{dimprod4}, $V(In(R))=V(In(R)_{SR})$. We show that for any two distinct vertices $X'=(X'_1,\ldots,X'_m)$ and $Y'=(Y'_1,\ldots,Y'_m)$ there is a path between them. For this we have the following cases:\\
 \textbf{Case 1.} $X', Y'\in M$.\\
 If $X',=Y'^{c}$ or $X'Y', X'Y'^{c} \notin E(In(R))$, then by Lemma \ref{dimprod4}, $X'$ and $Y'$ are adjacent in $In(R)_{SR}$. Thus suppose that $X'Y'\notin E(In(R)_{SR})$, so either $X'\neq Y'^{c}$, $Y'X'^{c}\in E(In(R))$ or $X'\neq Y'^{c}$, $X'Y'\in E(In(R))$. If $X'\neq Y'^{c}$ and $X'Y'\in E(In(R))$, then $X' \subset Y'$ or $Y' \subset X'$. Without loss of generality, we may assume that $X'\subset Y'$. Thus there are $1 \leq i\neq j \leq m$ such that $X'_{i}=Y'_{i}=R_{i}$ and  $X'_{j}=Y'_{j}=0$. Let $V= V_{1} \times \cdots \times V_{m}$, where $V_{i}= I_{i1}$, $V_{j}= I_{j1}$ and the other components are zero. Then $X' \sim V \sim Y'$ is a path between $X'$ and $Y'$. Now let $X'\neq Y'^{c}$ and $Y'X'^{c}\in E(In(R))$. Since $Y'X'^{c}\in E(In(R))$, there are $1 \leq i\neq j \leq m$ such that $X'_{i}=0, Y'_{i}=R_{i}$ and  $X'_{j}=R_{j}, Y'_{j}=0$. Let $V= V_{1} \times \cdots \times V_{m}$, where $V_{i}= I_{i1}$, $V_{j}= I_{j1}$ and the other components are $R_{i}^{,}$s. Then $X' \sim V \sim Y'$ is a path between $X'$ and $Y'$. Thus there exists a path between $X'$ and $Y'$.\\
 \textbf{Case 2.} $X', Y' \in V(In(R)_{SR})\setminus M$.\\
 If $X'Y'\notin E(In(R))$, by Lemma \ref{dimprod4}, $X'Y'\in E(In(R)_{SR})$. Thus suppose that $X'Y'\in E(In(R))$, so $X' \subset Y'$ or $Y' \subset X'$. Without loss of generality, we may assume that $X'\subset Y'$. If there exists $1 \leq i \leq m$ such that $X'_{i}=Y'_{i}=0$, then $X' \sim 0\times \cdots \times0 \times I_{i1}\times 0 \times \cdots \times 0 \sim Y'$ is a path between $X'$ and $Y'$. Thus suppose that $Y'_{i}\neq 0$ for every $1 \leq i \leq m$. Since $Y'\notin M$ there exists $1 \leq i \leq m$ such that $Y'_{i}\neq R_{i}$, so $Y' \sim V$, where $V$ is the vertex that is obtained by replacing the first and the $i$-th component of $Y'$ with zero and $R_{i}$. If the vertex $V$ is adjacent to $X'$, then $Y' \sim V\sim X'$ is a path between $X'$ and $Y'$. Otherwise, continue the same process to get a path between $X'$ and $Y'$.\\
 \textbf{Case 3.} $X'\in V(In(R)_{SR})\setminus M$ and $Y'\in M$.\\
 If $X'Y', X'Y'^{c}\notin E(In(R))$, then by Lemma \ref{dimprod4}, $X'Y' \in E(In(R)_{SR})$. Thus suppose that $X'Y'\in E(In(R))$ or $X'Y'^{c}\in E(In(R))$. If $X'Y'\in E(In(R))$, then $X' \subset Y'$ or $Y' \subset X'$. Without loss of generality, we may assume that $X'\subset Y'$. Since $Y'\in M$ there exists $1 \leq i\neq j \leq m$ such that $Y'_{i}=0$ and $Y'_{j}\neq0$. Then $X'\sim V\sim Y'$, where $V=0\times \cdots \times 0\times R_{i}\times 0 \times \cdots \times 0 \times I_{j1}\times 0 \times \cdots \times 0$ is a  path between $X'$ and $Y'$. If $X'Y'^{c}\in E(In(R))$, by a similar argument there exists a  path between $X'$ and $Y'$. Thus $In(R)_{SR}$ is a connected graph. 
}
\end{proof}
\begin{lem}\label{dimprod6}
 
Suppose that  $R\cong \prod_{i=1}^{m}{R}_{i}$, where $R_{i}$ is a PIR  non-field for every $1\leq i\leq m$ and $m\geq 2$ is a positive integer. Then $\beta(In(R)_{SR})=\Sigma _{i=1}^{m}n_{i} + m -1$.
\end{lem}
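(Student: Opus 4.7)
I will prove $\beta(In(R)_{SR})=\sum_{i=1}^{m}n_i+m-1$ by establishing matching lower and upper bounds, exploiting the structural description of $In(R)_{SR}$ provided by Lemma \ref{dimprod4}.

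\emph{Lower bound.} I will exhibit an explicit independent set of the required size. Consider the chain of non-trivial ideals of $R$ obtained by filling the components one at a time, from left to right:
\begin{align*}
\mathcal{C}:\quad & I_{11}\times 0\times\cdots\times 0\subsetneq\cdots\subsetneq I_{1n_1}\times 0\times\cdots\times 0\subsetneq R_1\times 0\times\cdots\times 0\\
& \subsetneq R_1\times I_{21}\times 0\times\cdots\times 0\subsetneq\cdots\subsetneq R_1\times R_2\times 0\times\cdots\times 0\\
& \subsetneq\cdots\subsetneq R_1\times\cdots\times R_{m-1}\times I_{mn_m}.
\end{align*}
Between consecutive ``corners'' $R_1\times\cdots\times R_k\times 0\times\cdots\times 0$ and $R_1\times\cdots\times R_{k+1}\times 0\times\cdots\times 0$ we insert the totally ordered chain of proper non-zero ideals of $R_{k+1}$, contributing $n_{k+1}$ vertices in $V(In(R))\setminus M$. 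Including the $m-1$ non-trivial corners from $M$, the chain $\mathcal{C}$ has exactly $\sum_{i=1}^{m}n_i+m-1$ elements. To verify independence in $In(R)_{SR}$, pick any $I\subsetneq J$ in $\mathcal{C}$. Strict inclusion gives $IJ\in E(In(R))$ and rules out $I=J^c$ (otherwise the positions where $I_i=R_i$ would have to satisfy $J_i=0$, contradicting $I\subsetneq J$). Therefore Lemma \ref{dimprod4}, in whichever of parts (2), (3), (4) applies (both in $M$, both outside $M$, or mixed), gives $IJ\notin E(In(R)_{SR})$. Hence $\beta(In(R)_{SR})\geq\sum_{i=1}^{m}n_i+m-1$.

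\emph{Upper bound.} Let $S$ be any independent set of $In(R)_{SR}$ and partition it as $S=S_{\bar M}\sqcup S_M$ with $S_M=S\cap M$. Part (3) of Lemma \ref{dimprod4} forces any two elements of $S_{\bar M}$ to be comparable in $In(R)$, so $S_{\bar M}$ is a chain in $V(In(R))\setminus M$. Any such chain sits inside a maximal chain of $I(R)\cup\{0,R\}$, which has length $\sum_{i=1}^{m}n_i+m+1$ and meets $M\cup\{0,R\}$ in exactly $m+1$ corner elements; removing those yields $|S_{\bar M}|\leq\sum_{i=1}^{m}n_i$. Parts (2) and (4) then govern the interaction: for any $J\in S_M$, each other element $I\in S$ must either lie in an inclusion relation with $J$ or with $J^c$, and the two are never equal (since $J\neq J^c$ always, and $J=J^c$ of another member is excluded by part (2)).

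The remaining task is to bound the joint size $|S_M|+|S_{\bar M}|$. The plan is to identify elements of $M$ with the non-empty proper subsets of $\{1,\ldots,m\}$ via their support pattern, and to analyse the antichain structure of $S_M$ inside this Boolean lattice (elements of $S_M$ may be pairwise incomparable, but only when their supports are disjoint and do not cover $\{1,\ldots,m\}$). Each incomparable pair inside $S_M$ tightens the support pattern available for $S_{\bar M}$, because by part (4) every member of $S_{\bar M}$ must be comparable, or complementary-comparable, with every element of $S_M$ simultaneously. I plan to show, by a careful structural argument, that for every admissible $S_M$ the maximum length of a chain in $V(In(R))\setminus M$ compatible with every $J\in S_M$ is at most $\sum_{i=1}^{m}n_i+m-1-|S_M|$; this is the step where the trade-off is resolved. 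The main obstacle is exactly this quantification: one must track how each antichain pair inside $S_M$ eliminates precisely the right number of "levels" from the chain $S_{\bar M}$. Once this trade-off is established, $|S|\leq\sum_{i=1}^{m}n_i+m-1$, which combined with the lower bound yields the stated equality.
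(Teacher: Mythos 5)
Your lower bound is correct and is essentially the paper's own construction: a maximal chain of non-trivial ideals, with $\sum_{i=1}^{m}n_i$ members outside $M$ and the $m-1$ corner elements in $M$, is independent in $In(R)_{SR}$ by Lemma \ref{dimprod4}, since for a comparable pair $I\subsetneq J$ we always have $IJ\in E(In(R))$ and $I\neq J^{c}$ (if $I=J^{c}$ then any index in the support of $I$ would force $R_i\subseteq 0$), which kills adjacency in each of parts (2), (3), (4).

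The upper bound, however, is not a proof. First, your intermediate claim that a maximal chain of $I(R)\cup\{0,R\}$ meets $M\cup\{0,R\}$ in exactly $m+1$ elements, whence $|S_{\bar M}|\leq\sum_{i=1}^{m}n_i$, is false: a maximal chain can avoid every corner of $M$. For $m=2$ and $n_1=n_2=1$ the chain $I_{11}\times 0\subsetneq I_{11}\times I_{21}\subsetneq I_{11}\times R_2$ consists of three vertices of $V(In(R))\setminus M$, pairwise comparable and hence pairwise non-adjacent in $In(R)_{SR}$ by part (3) of Lemma \ref{dimprod4}; so $|S_{\bar M}|$ can already equal $\sum n_i+m-1$ with $S_M=\emptyset$. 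Second, and decisively, the inequality you actually need --- that for every admissible $S_M$ the longest compatible chain in $V(In(R))\setminus M$ has at most $\sum n_i+m-1-|S_M|$ elements --- is the entire content of the upper bound, and you explicitly defer it (``the main obstacle is exactly this quantification''). Note also that $S_M$ genuinely need not be a chain: for $m=4$ the vertices $R_1\times 0\times 0\times 0$ and $0\times R_2\times 0\times 0$ have disjoint, non-covering supports, each is properly contained in the other's complement, and so they are non-adjacent in $In(R)_{SR}$ by part (2); such antichain configurations are exactly what your deferred trade-off must control. As written, the proposal establishes only $\beta(In(R)_{SR})\geq\sum_{i=1}^{m}n_i+m-1$. (For comparison, the paper's own treatment of the upper bound is also very terse --- it asserts that its chain is a largest independent set --- so the step you are missing is the genuinely delicate part of this lemma.)
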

\begin{proof}
{
If $m=2$ and $|I(R_{1})|=|I(R_{2})|=1$, then $In(R)_{SR}= K_{3}+ K_{2}+ K_{2}$, so $\beta(In(R)_{SR})= 3 = n_{1}+ n_{2}+m-1$. Otherwise, by Lemma \ref{dimprod5}, $In(R)_{SR}$ is a connected graph. Moreover, by Lemma \ref{dimprod4}, $V(In(R))=V(In(R)_{SR})$. Suppose that $X, Y \in V(In(R)_{SR})\setminus M$, then by Lemma \ref{dimprod4}, $XY \in E(In(R)_{SR})$ if and only if $XY \notin E(In(R))$. So the vertex $X$ is not adjacent to $Y$ if and only if $X\subset Y$ or $Y \subset X$. Let\\
$S= \lbrace 0\times \cdots \times 0\times I_{m,1}, \ldots , 0\times \cdots \times 0\times I_{m,n_{m}},\dots,  0\times \cdots \times 0\times I_{m-1,1}\times R_{m}, \ldots, 0\times \cdots \times 0\times I_{m-1,n_{m-1}}\times R_{m}, \ldots, I_{1,1}\times R_{2}\times \cdots \times R_{m}, \ldots , I_{1,n_{1}}\times R_{2}\times \cdots \times R_{m} \rbrace$. Then $S$ is the largest independent subset of $V(In(R)_{SR})\setminus M$ such that $X\nsim Y$ for every $X, Y \in S$ and $|S|=\Sigma_{i=1}^{m}n_{i}$.  Indeed, $S$ is the largest independent subset of $V(In(R)_{SR})\setminus M$  in $In(R)_{SR}[V(In(R)_{SR})\setminus M]$. On the other hand, among the vertices contained in $M$, only vertices which are contained in  $S^{\prime}=\lbrace 0\times \cdots \times 0 \times R_{m}, 0\times \cdots \times 0\times R_{m-1} \times R_{m}, \dots, 0\times R_{2}\times \cdots \times R_{m} \rbrace $ are not adjacent to vertices in $S$( we note that $|S^{\prime}|=m-1$). Let $A= S \cup S^{\prime}$. Thus $A$ is the largest independent subset of $V(In(R)_{SR})$ and hence $\beta (In(R)_{SR})=|A|= \Sigma _{i=1}^{m}n_{i} + m -1 $.
}
\end{proof}

\begin{thm}\label{isomorphism3}
 
Suppose that  $R\cong \prod_{i=1}^{m}{R}_{i}$, where $R_{i}$ is a PIR  non-field for every $1\leq i\leq m$ and $m\geq 2$ is a positive integer. Then $sdim(In(R))=\Pi_{i=1}^{m}(n_{i}+2)-\Sigma _{i=1}^{m}n_{i} - m -1$.
\end{thm}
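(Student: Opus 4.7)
The plan is to derive this theorem as a direct consequence of the lemmas established immediately before it, so the proof is essentially an arithmetic combination of three ingredients.

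First I would count the vertices of $In(R)$. Since $R \cong \prod_{i=1}^m R_i$ and each $R_i$ is a PIR with ideals totally ordered by inclusion, every ideal of $R$ has the form $I_1 \times \cdots \times I_m$ where $I_i$ ranges over the $n_i + 2$ ideals of $R_i$ (the $n_i$ nontrivial ideals together with $0$ and $R_i$). Removing the two trivial ideals $0$ and $R$ gives $|V(In(R))| = \prod_{i=1}^m (n_i+2) - 2$. By Lemma \ref{dimprod4}(1), $V(In(R)_{SR}) = V(In(R))$, so $|V(In(R)_{SR})|$ equals this same quantity.

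Next I would invoke Lemma \ref{dimprod6}, which tells us $\beta(In(R)_{SR}) = \sum_{i=1}^m n_i + m - 1$. Applying Gallai's theorem (Lemma \ref{Gallai}) to $In(R)_{SR}$ then yields
$$\alpha(In(R)_{SR}) = |V(In(R)_{SR})| - \beta(In(R)_{SR}) = \prod_{i=1}^m(n_i+2) - 2 - \sum_{i=1}^m n_i - m + 1.$$
Finally, by the Oellermann--Peters-Fransen characterization (Lemma \ref{Oellermann}), $sdim(In(R)) = \alpha(In(R)_{SR})$, which simplifies to the claimed formula $\prod_{i=1}^m(n_i+2) - \sum_{i=1}^m n_i - m - 1$.

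There is no real obstacle: all the structural work (that $In(R)_{SR}$ has the same vertex set as $In(R)$, the description of its edges, and the computation of the independence number $\beta(In(R)_{SR})$) has already been carried out in Lemmas \ref{dimprod4}--\ref{dimprod6}. The only care needed in writing the proof is to keep the arithmetic straight and to remind the reader that the edge case $m = 2$ with $|I(R_1)| = |I(R_2)| = 1$ (where $In(R)_{SR}$ is disconnected, by Remark \ref{dimfin1}) is handled uniformly, since Lemma \ref{dimprod6} gives the same value of $\beta$ in that case and Gallai's theorem is valid for disconnected graphs as well.
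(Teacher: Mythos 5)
Your proposal is correct and follows exactly the same route as the paper: Lemma \ref{dimprod6} for $\beta(In(R)_{SR})=\sum_{i=1}^{m}n_i+m-1$, the count $|V(In(R)_{SR})|=\prod_{i=1}^{m}(n_i+2)-2$ via Lemma \ref{dimprod4}(1), and then Gallai's theorem combined with Lemma \ref{Oellermann} to get $sdim(In(R))=|V(In(R)_{SR})|-\beta(In(R)_{SR})$. Your extra remark that the degenerate case $|I(R_1)|=|I(R_2)|=1$ is already absorbed into Lemma \ref{dimprod6} is a sensible clarification but does not change the argument.
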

\begin{proof}
{ By Lemma \ref{dimprod6}, $\beta(In(R)_{SR})=\Sigma _{i=1}^{m}n_{i} + m -1$. On the other hand, since
$|V(In(R)_{SR})|=\Pi_{i=1}^{m}(n_{i}+2)-2$, Gallai$^{^,}$s theorem and Lemma \ref{Oellermann} show that $sdim(In(R))=|V(In(R)_{SR})|-\beta(In(R)_{SR}) =\Pi_{i=1}^{m}(n_{i}+2)-\Sigma_{i=1}^{m}n_{i} - m -1$. }
\end{proof}

Finally, we investigate $sdim(In(R))$, where both of fields and non-fields appear in decomposition of $R$.

\begin{lem}\label{dimprod7}
Let  $R\cong S\times T$ such that $S= \prod_{i=1}^{m}{R}_{i}$, $m\geq 1$, $T=\prod_{j=1}^{n}\mathbb{F}_{j}$, $n\geq 1$, where $R_{i}$ is a PIR  non-field for every $1\leq i\leq m$, $m\geq 1$ is a positive integer and $\mathbb{F}_{j}$ is a field  for every $1\leq j\leq n$, $n\geq 1$ is a positive integer. Then the following statements hold:\\
$1)$ If $m=1$, $V(In(R)_{SR})= V(In(R))\setminus \lbrace I_{1}\times 0\times \cdots \times 0, I_{1}\times \mathbb{F}_{1}\times \cdots \times \mathbb{F}_{n}\rbrace $, else $V(In(R)_{SR})=V(In(R))$.\\
%$2)$ For every $ V \in V(In(R))$, $X_{i}V \in E(In(R)_{SR})$, where $m+1\leq i\leq m+n$ if and only if $V= X_{i}^{c}$.  
$2)$ Suppose that $I, J \in M \subset V(In(R)_{SR})$, then  $IJ \in E(In(R)_{SR})$ if and only if $I= J^{c}$ or $IJ, IJ^{c}\notin In(R)$. In particular, for every $V\in V(In(R)_{SR})$, if $VX_{i}\in E(In(R)_{SR})$, then $V= X_{i}^{c}$.\\
$3)$ Suppose that $I, J \in V(In(R)_{SR})\setminus M$, then  $IJ \in E(In(R)_{SR})$ if and only if $IJ \notin E(In(R)).$\\
$4)$ For every $I\in V(In(R)_{SR})\setminus M$ and $J\in M$, $IJ \in E(In(R)_{SR})$ if and only if $IJ, IJ^{c} \notin E(In(R))$.
\end{lem}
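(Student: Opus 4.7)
The plan is to verify each of the four parts by a case analysis on where the two vertices lie relative to the distinguished set $M$ from Remark~\ref{dimfin}, mirroring the proof of Lemma~\ref{dimprod4} but with extra care for the interaction between the non-field factors $R_{i}$ and the field factors $\mathbb{F}_{j}$.

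For part $(1)$, I first treat the generic case $m \geq 2$: given any vertex $V \in V(In(R))$, I exhibit the partner $V^{c}$ (in the sense of Remark~\ref{dimfin}) and argue, by an adaptation of the argument in Lemma~\ref{lemma2d} to the mixed setting, that $d(V,V^{c}) = 3 = \mathrm{diam}(In(R))$, which forces the mutual maximally distant property automatically. When $m = 1$, I examine the two designated vertices $I_{1}\times 0\times\cdots\times 0$ and $I_{1}\times \mathbb{F}_{1}\times\cdots\times\mathbb{F}_{n}$ and show that for every candidate partner $W \in V(In(R))$ one can produce a neighbor of one of these two vertices lying strictly farther from $W$ than the vertex itself; the crucial point is that, with only one non-field factor, the chain structure of $I(R_{1})$ supplies extra intermediate neighbors that spoil maximality.

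For parts $(2)$--$(4)$ the strategy is the same in spirit. One direction is routine: whenever $I = J^{c}$ (where applicable) or the stated non-edge relations hold in $In(R)$, a direct neighborhood check confirms that $I$ and $J$ are mutually maximally distant. For the converse, assuming $IJ \in E(In(R)_{SR})$ while the stated alternatives fail, I construct a neighbor of $I$ or $J$ that beats the distance $d(I,J)$. If $IJ \in E(In(R))$, the witness comes from an intermediate ideal in the chain $I \subset J$ or $J \subset I$; if $IJ^{c} \in E(In(R))$, the witness is an ideal with a single nonzero non-field component, which exists because each $R_{i}$ is a non-field. The \emph{in particular} statement in part $(2)$, that any $V$ with $VX_{i} \in E(In(R)_{SR})$ must equal $X_{i}^{c}$, follows because $X_{i}$ has only upward neighbors (ideals properly containing it), so the mutual maximality condition on $X_{i}$'s side pushes $V$ to distance $3$ from $X_{i}$; by the analogue of Lemma~\ref{dimprod0} for the mixed setting, this pins $V$ down to $X_{i}^{c}$.

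The main obstacle I expect is the $m = 1$ exceptional case in part $(1)$. One must carefully inventory all neighbors of the two excluded vertices — these neighbors live both in $M$ and outside $M$, and they form longer chains precisely because $I(R_{1})$ is non-trivial — and then show that every candidate partner is beaten by at least one such neighbor. A secondary subtlety in parts $(2)$--$(4)$ is that the set $V(In(R)_{SR}) \setminus M$ is non-empty exactly because the $R_{i}$ are non-fields; this is where the non-field hypothesis on the $R_{i}$ is genuinely used, since the intermediate witnesses one constructs require chains of length at least two inside some factor.
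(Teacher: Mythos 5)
There is a genuine gap in your treatment of part $(1)$ for $m\geq 2$. You propose to show that $d(V,V^{c})=3$ for \emph{every} vertex $V$ and to conclude mutual maximal distance ``automatically''. This fails for every vertex outside the set $M$ of Remark~\ref{dimfin}, i.e.\ for any $V$ having some component equal to a proper nonzero ideal of a non-field factor: such a $V$ has eccentricity $2$ and admits \emph{no} partner at distance $3$. For instance, if $V=\mathfrak{m}_{1}\times 0\times\cdots\times 0$, then $W=\mathfrak{m}_{1}\times R_{2}\times\cdots\times R_{m}\times\mathbb{F}_{1}\times\cdots\times\mathbb{F}_{n}$ properly contains $V$ and also contains $V^{c}$ (since $V_{1}^{c}\neq R_{1}$ forces $V_{1}^{c}\subseteq\mathfrak{m}_{1}$), so $d(V,V^{c})\leq 2$; running the same computation against an arbitrary $U$ is exactly the converse direction of Lemma~\ref{dimprod0} and shows that no vertex lies at distance $3$ from $V$. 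Hence for $V\notin M$ the diameter argument collapses, and you must instead pair $V$ with another vertex $V'\notin M$ that is incomparable to it and verify mutual maximal distance at distance $2$ (which is then immediate, precisely because both vertices have eccentricity $2$). This is the $M$ versus non-$M$ dichotomy that the paper routes through part $(1)$ of the corresponding all-non-field lemma (Lemma~\ref{dimprod4}) and that your own plan for parts $(2)$--$(4)$ already acknowledges; it must be imported into part $(1)$ as well.

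Two smaller points. In the converse direction of parts $(2)$--$(4)$ you say that when $IJ\in E(In(R))$ the witness ``comes from an intermediate ideal in the chain $I\subset J$''; but an ideal strictly between $I$ and $J$ is adjacent to both, hence at distance $1=d(I,J)$ from each, and witnesses nothing. The witness has to be a neighbour of one of the two vertices that is \emph{incomparable} to the other (for example a vertex supported on a single coordinate at which $I$ and $J$ differ). Second, in your $m=1$ inventory be aware that when $|I(R_{1})|\geq 2$ the two vertices that drop out of $V(In(R)_{SR})$ are $I_{\min}\times 0\times\cdots\times 0$ and $I_{\max}\times\mathbb{F}_{1}\times\cdots\times\mathbb{F}_{n}$, where $I_{\min}$ is the smallest nonzero and $I_{\max}$ the largest proper ideal of $R_{1}$ --- two \emph{different} ideals, not one and the same $I_{1}$ as the displayed statement suggests; your case check must land on exactly these two. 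Apart from these issues, your outline of the $m=1$ exceptional case, of the ``in particular'' clause in part $(2)$, and of parts $(2)$--$(4)$ follows the same (very terse) route as the paper, which simply defers to the arguments of Lemma~\ref{dimprod4}.
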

\begin{proof}{
 Let $m=1$ and $X = \lbrace I_{1}\times 0 \times \cdots \times 0, I_{1}\times \mathbb{F}_{1}\times \cdots \times \mathbb{F}_{n} \rbrace$. It is not hard to check that for any $U \in X$, there is no $V \in V(In(R))$ such that $U$ and $V$ are mutually
maximally distant. Thus $V(In(R)_{SR})= V(In(R))\setminus X$. Now we show that for  every $V\in V(In(R)_{SR})$, if $VX_{i}\in E(In(R)_{SR})$, then $V= X_{i}^{c}$. Suppose to the contrary, $V\neq X_{i}^{c}$. Then $ X_{i}\subset V$ or $V X_{i}^{c}\in E(In(R))$. These two cases imply that $ VX_{i}\notin E(In(R)_{SR}$, a contradiction.
 To complete the proof, it is enough to apply a similar argument to that of Lemma \ref{dimprod4}.
}
\end{proof}
\begin{lem}\label{dimprod8}
Let  $R\cong S\times T$ such that $S= \prod_{i=1}^{m}{R}_{i}$, $m\geq 1$, $T=\prod_{j=1}^{n}\mathbb{F}_{j}$, $n\geq 1$, where $R_{i}$ is a PIR  non-field for every $1\leq i\leq m$, $m\geq 1$ is a positive integer and $\mathbb{F}_{j}$ is a field  for every $1\leq j\leq n$, $n\geq 1$ is a positive integer. Then $In(R)_{SR}=  H + \underbrace{K_{2}+ \cdots +K_{2}}_{n\, times} $, where $H$ is a connected graph.
\end{lem}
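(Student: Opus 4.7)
The plan is to identify the $n$ copies of $K_2$ with the pairs $\{X_j, X_j^c\}$ for $m+1 \leq j \leq m+n$ (one per field factor of $R$), show each such pair is an isolated component of $In(R)_{SR}$, and then verify that the induced subgraph $H$ on the remaining vertices is connected.

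First I would apply Lemma \ref{dimprod7}(2) directly: for each $j \in \{m+1,\ldots,m+n\}$ and each $V \in V(In(R)_{SR})$, the edge $VX_j \in E(In(R)_{SR})$ forces $V = X_j^c$, so $X_j$ has exactly one neighbor, namely $X_j^c$. Next I would show the symmetric statement that $X_j^c$ has only $X_j$ as a neighbor. The key observation is a parity contradiction on the $j$-th coordinate: if $V \neq X_j$ were a neighbor of $X_j^c$, then by Lemma \ref{dimprod7}(2) (in the case $V \in M$) or by Lemma \ref{dimprod7}(4) (in the case $V \notin M$), one would need $VX_j \notin E(In(R))$ and $VX_j^c \notin E(In(R))$ simultaneously; the first forces the $j$-th coordinate of $V$ to be zero while the second forces it to be nonzero, a contradiction. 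Hence $\{X_j, X_j^c\}$ is an isolated component isomorphic to $K_2$, and the $n$ such pairs for distinct $j$'s are pairwise disjoint.

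The remaining task is to prove that $H := In(R)_{SR}[V(In(R)_{SR}) \setminus \bigcup_{j=m+1}^{m+n}\{X_j,X_j^c\}]$ is connected. I would mimic the proof of Lemma \ref{dimprod5}, taking arbitrary $X', Y' \in V(H)$ and splitting into three cases according to whether each of $X', Y'$ lies in $M$. In each case, parts (2)--(4) of Lemma \ref{dimprod7} either produce a direct edge or, when there is a containment $X' \subset Y'$ (or $X' \subset Y'^c$), allow me to construct a two-step path through an intermediate obtained by inserting the minimal nonzero ideal $I_{i1}$ of some $R_i$ into a zero coordinate of $X'$, or by replacing appropriate components with either $R_i$ or $I_{i1}$. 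Since $m \geq 1$, a non-field factor is always available to supply such intermediates.

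The main obstacle is the boundary case $m = 1$, where Lemma \ref{dimprod7}(1) already removes the vertices $I_1 \times 0 \times \cdots \times 0$ and $I_1 \times \mathbb{F}_1 \times \cdots \times \mathbb{F}_n$ from $V(In(R)_{SR})$, so any intermediate chosen in the path construction must be checked to lie in neither this forbidden set nor in $\bigcup_j \{X_j, X_j^c\}$. A careful case-split shows that whenever the naive intermediate falls into one of these sets, it can be adjusted by toggling a second coordinate (another field factor, or a different ideal of $R_1$) to produce an admissible intermediate. Combining the isolated $K_2$'s from the first two steps with the connectedness of $H$ yields the stated decomposition.
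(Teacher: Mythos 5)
Your proof follows essentially the same route as the paper's: the $n$ copies of $K_{2}$ are the pairs $\lbrace X_{j}, X_{j}^{c}\rbrace$ attached to the field positions $m+1\leq j\leq m+n$, and the connectedness of $H$ is obtained by repeating the path constructions of Lemmas \ref{dimprod5} and \ref{dimprod2}. You in fact supply two details the paper only asserts, namely the ``vice versa'' claim that $X_{j}^{c}$ has no neighbour other than $X_{j}$ (your parity contradiction on the $j$-th coordinate via Lemma \ref{dimprod7}) and the adjustment of intermediates required when $m=1$, where Lemma \ref{dimprod7}(1) deletes two vertices from $V(In(R)_{SR})$.
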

\begin{proof}{
 Since for every $I \in \lbrace X_{m+1}, \ldots , X_{m+n} \rbrace$, $I$ is only mutually maximally distant from $I^{c}$ and vice versa, $In(R)_{SR}$ contains $n$ copies of $K_{2}$. To complete the proof, it is enough to apply a similar argument to that of Lemma \ref{dimprod5} and Lemma \ref{dimprod2}.
}
\end{proof}
\begin{lem}\label{dimprod9}
 
Let  $R\cong S\times T$ such that $S= \prod_{i=1}^{m}{R}_{i}$, $m\geq 1$, $T=\prod_{j=1}^{n}\mathbb{F}_{j}$, $n\geq 1$, where $R_{i}$ is a PIR non-field  for every $1\leq i\leq m$, $m\geq 1$ is a positive integer and $\mathbb{F}_{j}$ is a field  for every $1\leq j\leq n$, $n\geq 1$ is a positive integer. Then the followings hold:\\
$1)$ If $m=1$, then $\beta(In(R)_{SR})=2n + n(R_{1})-2$.\\
$2)$ If $m\geq 2$, then $\beta(In(R)_{SR})=(\sum_{i=1}^{m} {n}_{i})+ 2n + m-1$.
\end{lem}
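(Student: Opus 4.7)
The plan is to mirror the strategy used for Lemma \ref{dimprod6}, invoking the decomposition from Lemma \ref{dimprod8} together with the adjacency criteria of Lemma \ref{dimprod7}. Since Lemma \ref{dimprod8} gives $In(R)_{SR} = H + \underbrace{K_{2} + \cdots + K_{2}}_{n\ \text{times}}$ and the independence number is additive over disjoint unions, $\beta(In(R)_{SR}) = n + \beta(H)$. Consequently, it suffices to prove $\beta(H) = n_{1} + n - 2$ when $m = 1$ and $\beta(H) = \sum_{i=1}^{m} n_{i} + m + n - 1$ when $m \geq 2$, in each case by exhibiting an independent set of the stated size in $H$ and then establishing a matching upper bound.

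For the lower bound, the key observation is that, by Lemma \ref{dimprod7}, any totally-ordered chain of vertices of $In(R)$ that lies inside $V(H)$ and contains no pair of the form $\{I, I^{c}\}$ is automatically an independent set of $H$. I would therefore exhibit a maximal chain of proper nontrivial ideals of $R$ contained in $V(H)$: start from $0 \times \cdots \times 0 \times I_{m,1}$, refine through the ideal lattice of $R_{m}$ up to $R_{m}$, then refine through $R_{m-1}$ with $R_{m}$ fixed in the $m$-th slot, and continue until all PIR slots are saturated; finally, turn on the field positions one at a time, carefully routing around the $2n$ excluded vertices $X_{m+j}$ and $X_{m+j}^{c}$. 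Counting: the PIR refinement contributes $\sum n_{i}$ non-$M$ vertices and $m-1$ $M$-vertices, and the field extension contributes $n$ further $M$-vertices, for a total of $\sum n_{i} + m + n - 1$. Case (1) is handled analogously; the exclusions in Lemma \ref{dimprod7}(1) remove two natural chain members, accounting for the $-2$ in the formula.

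The principal obstacle is the matching upper bound. By Lemma \ref{dimprod7}(4), a non-$M$ vertex $I$ and an $M$-vertex $J$ may be adjacent in $In(R)_{SR}$ even when they are incomparable in $In(R)$, provided also that $I$ and $J^{c}$ are incomparable; the edge set of $H$ is therefore strictly richer than the pure non-comparability graph handled in Lemma \ref{dimprod6}. To control this, I plan to stratify $V(H)$ by the support profile of each vertex (which positions are zero, which are full, which are proper nontrivial PIR ideals, and which equal some $\mathbb{F}_{j}$). Within a single stratum, the totally-ordered ideal structure of each PIR factor confines any independent set to a single chain, giving a sharp per-stratum bound; a careful bookkeeping across strata, isolating the unique complement pair in $M$ that a maximum independent set can accommodate, then yields the desired upper bound on $\beta(H)$. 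Combined with $\beta(In(R)_{SR}) = n + \beta(H)$, this delivers the claimed formulas.
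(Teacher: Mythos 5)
Your proposal takes essentially the same route as the paper's proof: both use the decomposition $In(R)_{SR}=H+K_{2}+\cdots+K_{2}$ from Lemma \ref{dimprod8} to reduce the problem to $\beta(H)$, and both realize the lower bound by the same maximal-chain independent set that refines through the PIR slots and then the field slots (note only that the paper dodges the excluded vertex $X_{m+n}^{c}$ by keeping $I_{1,n_{1}}$ rather than $R_{1}$ in the first slot during the field phase, so those last $n$ chain members are not $M$-vertices as your count suggests). On the upper bound, the paper is no more complete than you are --- it simply asserts that the chain is a largest independent set ``by a similar argument to that of Lemma \ref{dimprod6}'' --- so your stratification sketch, while unexecuted, matches and arguably exceeds the level of detail of the published argument.
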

\begin{proof}
{
$1)$ By Lemma \ref{dimprod8}, $In(R)_{SR}=  H + \underbrace{K_{2}+ \cdots +K_{2}}_{n\, times}$, so $\beta(In(R)_{SR})= \beta(H)+ n$. Also, by a similar argument to that of Lemma \ref{dimprod6}  and  case $(1)$ of Lemma \ref{dimprod7}, $S= \lbrace I_{1,1}\times 0\times \cdots \times 0 , \ldots , I_{1,n(R_{1})}\times 0\times \cdots \times 0, I_{1,n(R_{1})}\times \mathbb{F}_{1} \times 0 \cdots \times 0, \ldots , I_{1,n(R_{1})}\times \mathbb{F}_{1} \times  \cdots \times \mathbb{F}_{n-1} \rbrace$ is  the largest
independent subset of $V(H)$ and $|S|= n(R_{1}) + n-2 $. Hence $\beta(In(R)_{SR})=|S| + n= n(R_{1}) + 2n -2$.\\
$2)$ By Lemma \ref{dimprod7}, $V(In(R)_{SR})=V(In(R))$ and by Lemma \ref{dimprod8}, $In(R)_{SR}=  H + \underbrace{K_{2}+ \cdots +K_{2}}_{n\, times}$, so $\beta(In(R)_{SR})= \beta(H)+ n$. Also, by a similar argument to that of Lemma \ref{dimprod6}  and Lemma \ref{dimprod3}, $S= \lbrace 0\times \cdots \times 0\times I_{m,1}\times 0\times \cdots \times 0, \ldots , 0\times \cdots \times 0\times I_{m,n_{m}}\times 0\times \cdots \times 0, 0\times \cdots \times 0\times R_{m}\times 0 \times \cdots \times 0, 0\times \cdots \times 0\times I_{m-1,1}\times R_{m}\times 0 \times \cdots \times 0, \ldots, 0\times \cdots \times 0\times I_{m-1,n_{m-1}}\times R_{m}\times 0 \times \cdots \times 0,0\times \cdots \times 0\times R_{m-1}\times R_{m} \times 0 \times \cdots \times 0 ,\ldots, I_{1,1}\times R_{2} \times \cdots \times R_{m}\times 0 \times \cdots \times 0, \ldots , I_{1,n_{1}}\times R_{2} \times 0 \times \cdots \times R_{m}\times 0 \times \cdots \times 0, I_{1,n_{1}}\times R_{2}\times \cdots \times R_{m}\times \mathbb{F}_{1} \times 0 \times \cdots \times 0, \ldots, I_{1,n_{1}}\times R_{2}\times \cdots \times R_{m}\times \mathbb{F}_{1} \times  \cdots \times \mathbb{F}_{n}   \rbrace$ is  the largest
independent subset of $V(H)$ and $|S|= (\sum_{i=1}^{m} {n}_{i})+ n + m-1$. Hence $\beta(In(R)_{SR})=|S|+ n=(\sum_{i=1}^{m} {n}_{i})+ 2n + m-1$
}
\end{proof}

We close this paper with the following result.

\begin{thm}\label{isomorphism4}
 Let  $R\cong S\times T$ such that $S= \prod_{i=1}^{m}{R}_{i}$, $m\geq 1$, $T=\prod_{j=1}^{n}\mathbb{F}_{j}$, $n\geq 1$, where $R_{i}$ is a PIR  non-field for every $1\leq i\leq m$, $m\geq 1$ is a positive integer and $\mathbb{F}_{j}$ is a field  for every $1\leq j\leq n$, $n\geq 1$ is a positive integer. Then\\
$1)$ If $m=1$, then $sdim(In(R))=(n_{1}+2)2^{n}-2n - n_{1}-2$.\\
$2)$ If $m\geq 2$, then $sdim(In(R))=\prod_{i=1}^{m}(n_{i}+2)2^{n}-(\sum_{i=1}^{m} {n}_{i})- 2n - m-1$.
\end{thm}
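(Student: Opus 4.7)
The plan is to combine Gallai's theorem with Lemma \ref{Oellermann}, exactly as in the proof of Theorem \ref{isomorphism3}: these together give $sdim(In(R)) = |V(In(R)_{SR})| - \beta(In(R)_{SR})$, so it suffices to compute the vertex count of the strong resolving graph and subtract the independence number supplied by Lemma \ref{dimprod9}.

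First I would determine $|V(In(R))|$. Since each local PIR $R_i$ has exactly $n_i+2$ ideals (the $n_i$ non-trivial ones together with $0$ and $R_i$) and each field $\mathbb{F}_j$ contributes exactly $2$ ideals, subtracting the two trivial ideals of $R$ yields
\[
|V(In(R))| = \prod_{i=1}^{m}(n_i+2)\cdot 2^n - 2.
\]
By Lemma \ref{dimprod7}(1), when $m=1$ exactly two vertices of $In(R)$ fail to lie in $V(In(R)_{SR})$, so $|V(In(R)_{SR})| = (n_1+2)2^n - 4$. When $m \geq 2$, Lemma \ref{dimprod7} gives $V(In(R))=V(In(R)_{SR})$, hence $|V(In(R)_{SR})| = \prod_{i=1}^{m}(n_i+2)2^n - 2$.

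The final step is arithmetic. For part $(1)$, substituting $\beta(In(R)_{SR}) = 2n + n_1 - 2$ from Lemma \ref{dimprod9}(1) gives
\[
sdim(In(R)) = (n_1+2)2^n - 4 - (2n + n_1 - 2) = (n_1+2)2^n - 2n - n_1 - 2.
\]
For part $(2)$, substituting $\beta(In(R)_{SR}) = \sum_{i=1}^{m} n_i + 2n + m - 1$ from Lemma \ref{dimprod9}(2) gives
\[
sdim(In(R)) = \prod_{i=1}^{m}(n_i+2)\,2^n - 2 - \Bigl(\sum_{i=1}^{m} n_i + 2n + m - 1\Bigr) = \prod_{i=1}^{m}(n_i+2)\,2^n - \sum_{i=1}^{m} n_i - 2n - m - 1.
\]

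Since the heavy lifting (the identification of $In(R)_{SR}$ and the computation of its independence number) has already been carried out in Lemmas \ref{dimprod7}--\ref{dimprod9}, no genuine obstacle remains. The only point requiring care is bookkeeping: one must remember that when $m=1$ the strong resolving graph has two fewer vertices than $In(R)$, which accounts for the different shape of the formula in part $(1)$ versus part $(2)$.
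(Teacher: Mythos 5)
Your proposal is correct and follows exactly the same route as the paper: invoke Lemma \ref{Oellermann} together with Gallai's theorem to get $sdim(In(R))=|V(In(R)_{SR})|-\beta(In(R)_{SR})$, read off $\beta(In(R)_{SR})$ from Lemma \ref{dimprod9}, and count $|V(In(R)_{SR})|$ (with the two-vertex deficit in the $m=1$ case from Lemma \ref{dimprod7}). Your explicit justification of the vertex counts is slightly more detailed than the paper's, but the argument is the same.
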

\begin{proof}
{
$1)$ By Lemma \ref{dimprod9}, $\beta(In(R)_{SR})=2n + n(R_{1})-2$. On the other hand, since
$|V(In(R)_{SR})|=n_{1}+2)2^{n}-4$, Gallai$^{^,}$s theorem and Lemma \ref{Oellermann} show that $sdim(In(R))=|V(In(R)_{SR})|-\beta(In(R)_{SR}) =(n_{1}+2)2^{n}-2n - n_{1}-2$.\\
$2)$ By Lemma \ref{dimprod9}, $\beta(In(R)_{SR})=(\sum_{i=1}^{m} {n}_{i})+ 2n + m-1$. Since
$|V(In(R)_{SR})|=\prod_{i=1}^{m}(n_{i}+2)2^{n}-2$, Gallai$^{^,}$s theorem and Lemma \ref{Oellermann} show that $sdim(In(R))=|V(In(R)_{SR})|-\beta(In(R)_{SR}) =\prod_{i=1}^{m}(n_{i}+2)2^{n}-(\sum_{i=1}^{m} {n}_{i})- 2n - m-1$.}
\end{proof}

{\bf Conflicts of interest/Competing interests:} There is no conflict of interest regarding the publication of this paper.

{\bf Ethics approval:} Not applicable.

{\bf Funding:} The author(s) received no financial support for the research, authorship, and/or publication of this article.

{\bf Availability of data and material (data transparency):} Not applicable.

{\bf Code availability (software application or custom code):} Not applicable.

{\bf Authors contribution:} All authors have contributed equally.

{\bf Acknowledgment:} Not applicable.

{\bf Human Participants and/or Animals:} Not applicable.

%%%%%%%%%%%%%%%%%%%%%%%%%%%%%%%%%%%%%%%%%%%%%%%%%%%%%%%%%%%%%%%%%%%%%%%%%%%%%%%%%%%%%%%%%%%%%%%%%%%%%%%%%%%%%%%%%%%%%%%%%%%%%%%%%%%%%%%%%%%%%%%%%%%%%%

{}

\end{document}